\long\def\beginpgfgraphicnamed#1#2\endpgfgraphicnamed{\includegraphics{#1}} 
\newcounter{stepcnt}
\newcommand*{\abs}[1]{\left|#1\right|} 
\newcommand*{\field}[1]{\mathbb{#1}}
\let\xR\R
\newcommand*{\xN}{\mathbb{N}}
\newcommand*{\anynorm}[1]{\left\|#1\right\|}
\newcommand*{\norm}[2]{\anynorm{#2}_{#1}}
\newcommand*{\compl}[1]{#1^{c}}
\newcommand*{\ind}[1]{\mathbf{1}_{#1}}
\DeclareMathOperator{\supp}{Supp}
\DeclareMathOperator{\cpc}{Cap}
\newcommand{\var}{\mathbf{Var}}
\newcommand*{\ie}{\emph{i.e.}}
\newcommand*{\eg}{\emph{e.g.}}
\newtheorem{thrm}{Theorem}
\newtheorem{prpstn}[thrm]{Proposition}
\newtheorem{dfntn}[thrm]{Definition}
\newtheorem{rmrk}[thrm]{Remark}
\newtheorem{lmm}[thrm]{Lemma}
\newtheorem{claim}[thrm]{Claim}
\newcommand{\PhiStar}{\Phi^{\star}}
\newcommand{\bstar}{{b^\star}}
\newcommand*{\dmu}{\,d\mu}
\title{Super Poincaré inequalities, Orlicz norms and essential spectrum}
\author{P.-A. Zitt}
\begin{document}
\maketitle

\begin{abstract}
  We prove some results about the super Poincaré inequality (SPI) and its relation to the spectrum of an operator:  we show that it can be alternatively written with Orlicz norms instead of $L^1$ norms, and we use this to give an alternative proof that  a bound on the bottom of the essential spectrum implies a SPI. Finally, we apply these ideas to give a spectral proof of the log Sobolev inequality for the Gaussian measure.

  Keywords: Super Poincaré inequality, Essential spectrum, measure--capacity inequality
  
  MSC 2010: 39B62, 47A10, 47D07
\end{abstract}

\section{Introduction}
In recent years, the study of various functional inequalities, and their relation to decay estimates for various processes, has attracted much attention. Some of these inequalities, namely the Poincaré inequality and the super Poincaré inequality, have a direct relation with the spectrum of the generator of the process; for others the relation with spectral properties is not so clear (see however \cite{ABD07}, where the spectrum is used to prove functional inequalities).  Our focus here will be the super Poincaré inequality. 

Before presenting our results, let us explain the setting. For the sake of clarity, we choose the simplest possible setting and consider a diffusion operator in $\xR^n$: 
\[ L: f \mapsto \Delta f - \nabla V \cdot \nabla f,\]
where $V$ is a smooth function such that $\exp(-V)$ is integrable. In this setting, $L$ is the generator of a diffusion with gradient drift, which has an invariant and reversible  measure $\mu(dx) = \exp(-V(x)) dx$: $L$ is self-adjoint in $L^2(d\mu)$. Finally  we define the Dirichlet form $D(f,f)$ by
\[ D(f,f) = \int f (-L)f \dmu = \int \abs{\nabla f}^2 \dmu.\]

In \cite{Wan00Essential}, Wang introduces a new functional inequality, which he calls the ``super Poincaré inequality'' (SPI, see definition \ref{def=SPI} below). 
  He proves that this inequality is qualitatively equivalent to the absence of essential spectrum for the generator $L$. In subsequent papers (in particular \cite{GW02}, with Gong, see also the review paper \cite{Wan00Semigroup}), he studies the links between this inequality, the essential spectrum, the compactness of various resolvents and bounds on the semi-group generated by $L$. 

 We will be particularly interested in the proof of a functional inequality, starting from information on the spectrum. Wang's first proof of this result (\cite{Wan00Essential}) relies on a decomposition of the space: he uses a spectral result (Donnelly--Li's decomposition principle) to show that the essential spectrum depends on the behaviour of $L$ at infinity; then he studies the functional inequality on the inside and on the outside of a large ball. 

 In \cite{GW02}  a more ``natural'' proof of  this implication is given, using the spectral decomposition of the generator (\ie{} $L$ is decomposed on the frequency space rather than the ordinary space). 

 \bigskip
 We present here two results around the SPI. In section \ref{sec=SPIimpliesPI} we recall definitions and known facts about this inequality. Then we introduce a  variant of it, where we replace an $L^1$ norm  by a more general Orlicz norm; we show in section \ref{sec=OSPI} that this variant is qualitatively equivalent to the original SPI. In section \ref{sec=WangRevisited}, we use this variant to give an alternative proof of the fact that bounds on the essential spectrum of the operator $L$ imply a super Poincaré inequality.  Finally, in section \ref{sec=example},  we show how to recover the log Sobolev inequality for the gaussian from the spectral decomposition of the generator of the Ornstein--Uhlenbeck process.

\section{The super Poincaré inequality}
\label{sec=SPIimpliesPI}
\subsection{Definitions}
Let us first recall the classical Poincaré inequality: 
\begin{dfntn}
  The measure $\mu$ satisfies the Poincaré inequality if there is a $C$ such that:
  \begin{equation}
    \label{eq=defPoincare}
  \forall f,\quad \var_\mu (f) \leq C \int \abs{\nabla f}^2 \dmu.
\end{equation}
\end{dfntn}
The definition of the super Poincaré inequality is somewhat obscured by a different choice of notation between \cite{Wan00Essential} and \cite{BCR07}: we follow the first article and use the following definition.
\begin{dfntn}
  [Wang, \cite{Wan00Essential}]
  \label{def=SPI}%
  A measure $\mu$  satisfies a Super Poincaré inequality if: 
  \begin{equation}
    \forall r>0, \exists \beta(r), \forall f, \quad
    \int f^2 \dmu \leq r \int \abs{\nabla f}^2 \dmu + \beta(r) \left( \int \abs{f} \dmu \right)^2.
    \label{eq=defSPI}
  \end{equation}
  We say that a partial SPI holds if \eqref{eq=defSPI} is satisfied only for $r>r_0$.
\end{dfntn}

  In \cite{BCR07}  a different (if similar) definition is used, which we temporarily denote by SPI(BCR). These authors consider the inequality:
  \begin{equation}
    \label{eq=SPIBCR}
     \forall s\geq 1, \exists \beta_{BCR}(s), \forall f \quad \int f^2 \dmu \leq \beta_{BCR}(s) \int \abs{\nabla f}^2 \dmu + s \left( \int \abs{f} \dmu \right)^2.
  \end{equation}
   This turns out to be equivalent to the formulation of Wang, in the following manner.
   \begin{prpstn}
     SPI (BCR) is equivalent to a partial SPI (in the sense of definition \ref{def=SPI}), with $r_0 = \lim_{s\to \infty} \beta_{BCR}(s)$. 
   \end{prpstn}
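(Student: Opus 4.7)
The plan is to observe that both inequalities parameterize the same family of admissible pairs $(A,B)\in\xR_+\times\xR_+$ for which $\int f^2 \dmu \leq A \int \abs{\nabla f}^2 \dmu + B \left( \int \abs{f} \dmu \right)^2$ holds for every smooth $f$: Wang fixes $A=r$ and reports the best $B=\beta(r)$, while BCR fixes $B=s$ and reports the best $A=\beta_{BCR}(s)$. Since this admissible set is upward-closed in both coordinates, both $\beta$ and $\beta_{BCR}$ may be taken nonincreasing, and the two functions describe the same lower boundary of the admissible set, viewed through different axes.

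The implication SPI(BCR) $\Rightarrow$ partial SPI is then essentially immediate. For $r > r_0 := \lim_{s\to\infty}\beta_{BCR}(s)$, the monotonicity of $\beta_{BCR}$ and the definition of the limit give some $s$ with $\beta_{BCR}(s) < r$; the BCR inequality applied at this $s$ yields $\int f^2 \dmu \leq r \int \abs{\nabla f}^2 \dmu + s\left(\int \abs{f}\dmu\right)^2$, so Wang's partial SPI holds on $(r_0,\infty)$ with $\beta(r) := s$.

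For the converse, I would set $\beta_{BCR}(s) := \inf\{\,r > r_0 : \beta(r) \leq s\,\}$. Given $\eps > 0$, pick $r \in (\beta_{BCR}(s), \beta_{BCR}(s) + \eps)$ with $\beta(r) \leq s$; Wang's inequality at this $r$ then gives $\int f^2 \dmu \leq (\beta_{BCR}(s)+\eps) \int \abs{\nabla f}^2 \dmu + s\left(\int\abs{f}\dmu\right)^2$, and letting $\eps \downarrow 0$ produces SPI(BCR) with the stated constant. The bound $\beta_{BCR}(s) \geq r_0$ is clear from the definition, and conversely $\beta_{BCR}(\beta(r)) \leq r$ for every $r > r_0$, which gives $\inf_s \beta_{BCR}(s) \leq r$; letting $r \downarrow r_0$ identifies $\lim_{s\to\infty}\beta_{BCR}(s) = r_0$.

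The one point that is not mere shuffling of parameters is verifying that $\beta_{BCR}(s) < \infty$ for every $s \geq 1$, \ie that for each such $s$ some $r > r_0$ satisfies $\beta(r) \leq s$. Testing the partial SPI against constants forces $\beta(r) \geq 1$ (as $\mu$ is a probability measure), and the converse inequality $\lim_{r\to\infty}\beta(r) = 1$ should be extracted by feeding a variance--Dirichlet bound (a Poincaré inequality for mean-zero functions) into the large-$r$ regime of the partial SPI. This last step is the only part of the argument I would spell out in detail; everything else is formal manipulation of the two parameterizations.
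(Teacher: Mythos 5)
Your argument is correct and, stripped of the tidy reparameterization framing, is essentially the same as the paper's: both directions hinge on the same two observations, namely that $\beta_{BCR}$ need not tend to zero (hence only a partial SPI for $r>\lim_s\beta_{BCR}(s)$), and that the converse needs the SPI--implies--Poincaré step to replace $\lim_{r\to\infty}\beta(r)$ by $1$ so that \eqref{eq=SPIBCR} holds for all $s\geq 1$. You correctly flag that last step as the only nontrivial point to spell out, which is exactly what the paper does by deferring to the theorem proved in the next subsection.
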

   Indeed, $\beta_{BCR}$ in \eqref{eq=SPIBCR} is not a priori supposed to go to zero, so one can only get \eqref{eq=defSPI} for $r$ big enough. Conversely, if \eqref{eq=defSPI} holds, one could a priori only show \eqref{eq=SPIBCR} for $s \geq \lim_{r\to \infty} \beta(r)$. However, a (possibly partial) SPI implies Poincaré (as we will reprove below), therefore one can always set $\beta(r) = 1$ in \eqref{eq=defSPI} for $r$ large enough; so \eqref{eq=SPIBCR} holds for every $s\geq 1$.

\subsection{Relation with the Poincaré inequality}
As a warm-up, we give a proof of the Poincaré inequality, given a SPI and an additional hypothesis on the function $\beta$.
\begin{rmrk}
Let us note here that this hypothesis may be replaced by much more natural ones: in fact, the SPI can be used together with a weaker inequality (such as local Nash inequalities, \cite{Wan00Essential}, th. 2.1, or weak Poincaré inequalities, \cite{RW01}) to get the Poincaré inequality. The use of the weak Poincaré inequality makes for a very nice functional proof. 
\end{rmrk}

\begin{thrm}
  Suppose $\mu$ satisfies a (possibly partial) SPI, and that the  following condition holds: $\exists r>0, \beta(r)<2$.

  Then $\mu$ satisfies the Poincaré inequality. 
\end{thrm}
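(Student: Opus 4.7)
The plan is to reduce the variance inequality to an application of the SPI on two well-chosen pieces of $f$, using the median to control the $L^1$ term by the $L^2$ term.

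First, I would fix $r > 0$ with $\beta(r) < 2$, and let $m$ be a median of $f$ under $\mu$, so that $\mu(f > m) \leq 1/2$ and $\mu(f < m) \leq 1/2$. Set $g = f - m$ and split $g = g_+ - g_-$ with $g_\pm \geq 0$, $\supp(g_+) \subset \{g>0\}$, $\supp(g_-) \subset \{g<0\}$. The constants do not affect the gradient, so $\int|\nabla g|^2\dmu = \int|\nabla f|^2\dmu$, and, since $\nabla g_+$ and $\nabla g_-$ have disjoint supports, $\int|\nabla g_+|^2\dmu + \int|\nabla g_-|^2\dmu = \int|\nabla f|^2\dmu$.

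Next, I would apply the SPI (at this fixed $r$) to $g_+$ and to $g_-$ separately. The key gain comes from the Cauchy--Schwarz estimate
\[
\left(\int |g_+|\dmu\right)^2 = \left(\int g_+ \ind{g>0}\dmu\right)^2 \leq \mu(g>0)\int g_+^2\dmu \leq \frac{1}{2}\int g_+^2\dmu,
\]
and similarly for $g_-$. Plugged into \eqref{eq=defSPI}, this yields
\[
\left(1 - \tfrac{\beta(r)}{2}\right)\int g_\pm^2 \dmu \leq r\int|\nabla g_\pm|^2\dmu.
\]
Since $\beta(r) < 2$, the bracket is positive; adding the two inequalities and using $\int g^2\dmu = \int g_+^2\dmu + \int g_-^2\dmu$ gives
\[
\int (f-m)^2 \dmu \leq \frac{2r}{2-\beta(r)}\int|\nabla f|^2\dmu.
\]
Finally, since the mean minimizes $c \mapsto \int (f-c)^2\dmu$, one has $\var_\mu(f) \leq \int (f-m)^2\dmu$, which concludes with $C = 2r/(2-\beta(r))$.

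The only mildly delicate point is the partial SPI caveat: the hypothesis provides a single $r$ with $\beta(r) < 2$, and one must be allowed to use the SPI at that specific $r$ — which is fine as soon as this $r$ lies in the range where \eqref{eq=defSPI} is asserted. Everything else (median, decomposition $g = g_+ - g_-$, Cauchy--Schwarz on a set of measure $\leq 1/2$) is routine; the real content is the observation that $\beta(r) < 2$ is exactly what is needed to absorb the $L^1$ term into the $L^2$ term after using the median.
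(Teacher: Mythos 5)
Your proof is correct and follows essentially the same route as the paper: same median decomposition $f_\pm = (f-m)_\pm$, same Hölder/Cauchy--Schwarz bound $\norm{1}{f_\pm}^2 \leq \tfrac12\norm{2}{f_\pm}^2$, same absorption into the SPI, and the same constant $C = 2r/(2-\beta(r))$. The only (harmless) cosmetic differences are that you state the gradient splitting as an equality and make explicit that the median must lie in the range $r>r_0$ where the partial SPI is asserted, both of which are implicit in the paper's argument.
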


The proof will use the following elementary lemma. 
\begin{lmm}
  \label{lmm=lemmeSympa}
  \begin{enumerate}
    \item For all $f$, $\left( \int \abs{f} \dmu \right)^2 = \norm{1}{f}^2 \leq \mu(\supp f) \norm{2}{f}^2$.
    \item If the SPI holds, then
      \begin{equation}
	\forall r>r_0, \forall f, \quad \left( 1 - \beta(r) \mu(\supp(f))\right) \int f^2 \dmu \leq r \int \abs{\nabla f }^2 \dmu.
    \end{equation}
  \end{enumerate}
\end{lmm}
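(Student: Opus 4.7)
The plan is direct; both parts are one-line consequences of standard tools. For part 1, I would apply Cauchy--Schwarz to the product $|f| \cdot \ind{\supp f}$. Since $f$ vanishes outside its support, the integral of $|f|$ equals the integral of $|f| \ind{\supp f}$, so
\[
\int \abs{f} \dmu = \int \abs{f} \ind{\supp f} \dmu \leq \left(\int f^2 \dmu\right)^{1/2} \left(\int \ind{\supp f} \dmu\right)^{1/2} = \norm{2}{f} \, \mu(\supp f)^{1/2}.
\]
Squaring both sides yields the stated inequality.

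For part 2, I would simply feed the bound from part 1 into the SPI \eqref{eq=defSPI}. Replacing $\left(\int |f| \dmu\right)^2$ by $\mu(\supp f) \int f^2 \dmu$ on the right-hand side of the SPI gives
\[
\int f^2 \dmu \leq r \int \abs{\nabla f}^2 \dmu + \beta(r) \mu(\supp f) \int f^2 \dmu,
\]
and moving the last term to the left-hand side produces exactly the claimed inequality. There is no real obstacle: the only thing to be slightly careful about is that \eqref{eq=defSPI} is assumed to hold only for $r > r_0$ in the partial case, which is why the conclusion is stated under the same restriction.
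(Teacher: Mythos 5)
Your proof is correct and matches the paper's argument: part 1 is the Cauchy--Schwarz (\ie{} Hölder $p=q=2$) inequality applied to $|f|\cdot\ind{\supp f}$, and part 2 follows by substituting this bound into the SPI and rearranging. The paper states exactly this, calling part 1 ``Hölder's inequality'' and part 2 ``a direct consequence.''
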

\begin{proof}
  The lemma is almost trivial:
  the first part is Hölder's inequality; the second part follows directly from the first and the definition of the SPI.

To prove the theorem, let us first suppose that for some $r$, $\beta(r)<2$. 
Let $f$ be in $L^2$, and let $m_f$ be a median of $f$. 
Then
\begin{equation}
  \label{eq=coupeEnDeux}
  \var(f)\leq  \int (f- m_f)^2 \dmu \leq  \int f_+^2 \dmu + \int f_-^2 \dmu,
\end{equation}
where $f_\pm = (f-m_f)_\pm$.
We apply lemma \ref{lmm=lemmeSympa} to $f_+$ and $f_-$: since $\mu(\supp(f_\pm)) \leq 1/2$ by definition of the median, we get
\begin{equation}
  \label{eq=passeAGauche}
  (1 - \beta(r)/2) \int f_\pm^2 \dmu \leq r \int \abs{\nabla f_\pm}^2 \dmu 
\end{equation}
We choose $r$ such that  $\beta(r)<2$, and define $C = r / (1 - \beta(r)/2)$. Then we
 inject \eqref{eq=passeAGauche} (written for $f_+$ and $f_-$) into \eqref{eq=coupeEnDeux}, and we get:
\begin{align*}
  \var_\mu(f)
  &\leq C \left( \int \abs{\nabla f_+}^2 \dmu + \int \abs{\nabla f_-}^2 \dmu\right) \\
  &\leq C \int \abs{\nabla f}^2 \dmu,
\end{align*}
which is the Poincaré inequality \eqref{eq=defPoincare}
\end{proof}

\section{Super Poincaré with Orlicz norms}
\label{sec=OSPI}
\subsection{Measure--capacity inequalities}
Looking at the definition of the SPI (eq. \eqref{eq=defSPI}), it is natural to ask if we have any freedom in the choice of the norm in the right hand side. For weak Poincaré inequalities, for example, we have shown in \cite{Zit07annealing} (section 4) that one can work with various norms instead of a sup norm. Here the situation is similar: any norm ``between'' $L^1$ and $L^2$ should give (qualitatively) the same properties. 
In order to study these generalized inequalities, we would like to have a characterisation of them in terms of a measure--capacity criterion. This follows the well established approach of \cite{BR03} (for details, see for example \cite{BCR05b}, section 5.2), where inequalities between measure and capacity of sets are used as a general benchmark to compare various functional inequalities. Let us briefly recall the specific notion of capacity involved:
\begin{dfntn}
  For any set $A$ whose measure is smaller than $1/2$, the capacity of $A$ is defined by: 
  \begin{equation}
  \label{eq=defCapacity}
  \cpc_\mu(A) = \inf \left\{ \int \abs{\nabla f}^2 \dmu,
  \ind{A} \leq f \leq 1, 
  \mu(\supp(A)) \leq 1/2\right\}.
\end{equation}
  The functions appearing in this definition will be called ``admissible'' for $A$.
\end{dfntn}
We will use the following comparison between capacity and measures of sets:
\begin{dfntn}
  We say that the measure--capacity inequality (MC) holds for $(\kappa,C_\kappa)$ if, for all sets $A$ such that $\mu(A)\leq \kappa$, 
  \begin{equation}
    \label{eq=MC}
    \cpc_\mu(A) \geq C_\kappa \mu(A).
\end{equation}
\end{dfntn}
Inequalities like MC are known to be related to functional inequalities. For example, if $C_{MC}$ is the optimal constant in the inequality above for $\kappa = 1/2$, and $C_P$ is the optimal Poincaré constant, then
\begin{equation}
  \label{eq=CMPoincare}
  C_{MC}^{-1} \leq C \leq 4 C_{MC}^{-1}.
\end{equation}
(see \cite{BCR05b}, proposition 13).
\subsection{From Super Poincaré to measure--capacity}
The SPI has already been studied with measure--capacity tools (\cite{BCR07}). 
With our notations, we obtain:
\begin{thrm}
  \label{thm=CMSPI}
  If MC holds for $(\kappa,C_\kappa)$, and if $\kappa\mapsto \frac{\kappa}{C_\kappa}$ is non increasing,  then the SPI \eqref{eq=defSPI} holds with:
  \begin{equation}
    \label{eq=fromMCtoSPI}
    \beta(r) = \frac{1}{ \inf\{ \kappa, C_\kappa \geq 8/r\} },
\end{equation}
whenever this quantity is finite. 
  In particular, the SPI holds for $r > (8\cdot  \lim_{\kappa\to 0} C_\kappa^{-1})$, so we have a full SPI if $C_\kappa \to \infty$. 
\end{thrm}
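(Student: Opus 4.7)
The plan is to combine a dyadic slicing of $f$ with a splitting of the function at a threshold dictated by $\kappa_r := \inf\{\kappa : C_\kappa \geq 8/r\}$, using MC for the high part (which supplies the gradient term) and a direct $L^1$ estimate for the bulk. First observe that the hypothesis ``$\kappa/C_\kappa$ non-increasing'' forces $C_\kappa$ itself to be non-decreasing in $\kappa$; in particular, for every $\kappa \in (\kappa_r, 1/2]$ one has $C_\kappa \geq 8/r$. By standard reductions (replace $f$ by $\abs{f}$, then split around a median as in the proof of the preceding theorem, using $m_f \leq 2 \int \abs{f} \dmu$), one may assume $f \geq 0$ and $\mu(\supp(f)) \leq 1/2$.

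Set $M := \inf\{t > 0 : \mu(\{f \geq t\}) \leq \kappa_r\}$. A Markov estimate just below $M$ gives $M \cdot \kappa_r \leq \int f \dmu$, hence $M \leq \int f \dmu / \kappa_r$. Decompose $f = f_1 + f_2$ with $f_1 = f \wedge M$ and $f_2 = (f - M)_+$; since $f_1 \equiv M$ on $\supp(f_2)$, a direct expansion of $(f_1+f_2)^2$ yields
\[ \int f^2 \dmu = \int f_1^2 \dmu + 2M \int f_2 \dmu + \int f_2^2 \dmu. \]
The first two summands are controlled by the $L^1$ norm, since $\int f_1^2 \dmu \leq M \int f \dmu$ (using $f_1 \leq M$ and $f_1 \leq f$) and $2M \int f_2 \dmu \leq 2M \int f \dmu$, both being of order $(\int f \dmu)^2 / \kappa_r$ by the bound on $M$.

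For $\int f_2^2 \dmu$, note that every level set $\{f_2 \geq t\}$ is contained in $\{f \geq M\}$, which has measure $\leq \kappa_r \leq 1/2$. Applying MC at some $\kappa \in (\kappa_r, 1/2]$ therefore gives $\mu(\{f_2 \geq t\}) \leq (r/8)\, \cpc(\{f_2 \geq t\})$. A standard dyadic slicing of $f_2$ --- with test functions equal to $1$ on $\{f_2 \geq \rho^k\}$, vanishing outside $\{f_2 \geq \rho^{k-1}\}$ and whose gradient is controlled by $\abs{\nabla f}/(\rho^k - \rho^{k-1})$ --- converts the capacity bound into $\int f_2^2 \dmu \leq r \int \abs{\nabla f}^2 \dmu$, the combinatorial factor from the slicing being precisely what the constant $8$ in the definition of $\kappa_r$ absorbs.

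The main obstacle is purely bookkeeping: one must arrange every constant in the reductions and in the slicing step so that the gradient coefficient is exactly $r$ and the $L^1$ coefficient is exactly $1/\kappa_r$. The conceptual heart, however, is the simple observation that any set of measure $\leq \kappa_r$ still benefits from MC via a slightly larger $\kappa$, supplying the effective constant $8/r$ that is baked into the definition of $\kappa_r$.
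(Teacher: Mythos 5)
The paper's own ``proof'' of this theorem is a three-line reduction to Theorem~1 and Corollary~6 of \cite{BCR07}: the author simply sets $\beta_{BCR}(s)=(C_{1/s})^{-1}$, checks monotonicity, and reads off the conclusion. Your attempt is therefore a genuinely different route, namely a self-contained Maz'ya-style argument: threshold $f$ at the level $M$ where the superlevel measure drops to $\kappa_r$, control the bulk $(f\wedge M)$ by an $L^1$ estimate, and feed the tail $(f-M)_+$ into a dyadic slicing that uses MC. The decomposition and the Markov bound $M\leq(\int f\dmu)/\kappa_r$ are correct, and the algebra $\int f^2\dmu=\int f_1^2\dmu+2M\int f_2\dmu+\int f_2^2\dmu$ is exact. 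This is conceptually the right shape of proof and is in spirit what lies beneath the BCR reference.

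There is, however, a real gap in the step you dismiss as bookkeeping. With test functions $h_k=\min\bigl(1,(f_2-\rho^{k-1})_+/(\rho^k-\rho^{k-1})\bigr)$ one gets $\cpc(\Omega_k)\leq(\rho^k-\rho^{k-1})^{-2}\int_{\Omega_{k-1}\setminus\Omega_k}\abs{\nabla f_2}^2\dmu$, and after Abel summation the slicing yields
\[
\int f_2^2\dmu\ \leq\ \frac{(\rho+1)\rho^2}{\rho-1}\cdot\frac{r}{8}\int\abs{\nabla f_2}^2\dmu,
\]
and $\inf_{\rho>1}(\rho+1)\rho^2/(\rho-1)\approx 11.09$ (attained at $\rho$ the golden ratio), which is strictly larger than $8$. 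So the most you can extract from this slicing is $\int f_2^2\dmu\lesssim 1.4\,r\int\abs{\nabla f}^2\dmu$, not $\leq r\int\abs{\nabla f}^2\dmu$ as claimed: the factor $8$ does \emph{not} absorb the combinatorial loss. To actually land on $8$ you would need the sharper Maz'ya argument that produces the capacity-to-Dirichlet constant $4$ (rather than $\approx 11$) and then pay another factor $2$ passing from $\int g^2$ to variance --- this is exactly where the $4$ in \eqref{eq=CMPoincare} and the $8$ in \cite{BCR07} come from, and it is precisely what the citation to BCR supplies. Likewise, your $L^1$ part gives $3M\int f\dmu\leq 3\kappa_r^{-1}(\int f\dmu)^2$, so the coefficient in front of $(\int\abs{f}\dmu)^2$ is $3/\kappa_r$, not $1/\kappa_r$ as \eqref{eq=fromMCtoSPI} asserts. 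So your argument proves a qualitatively correct SPI but with worse constants than the statement; the exact formula does not come out of this slicing. One smaller point: you correctly deduce from the stated hypothesis ``$\kappa/C_\kappa$ non-increasing'' that $C_\kappa$ is non-decreasing and read $\kappa_r=\inf\{\kappa:C_\kappa\geq 8/r\}$ accordingly, but this is at odds with the paper's own surrounding remarks (``$C_\kappa\to\infty$'' as $\kappa\to 0$) and with the translation $\beta_{BCR}(s)=(C_{1/s})^{-1}$ in the proof, which force $C_\kappa$ to be non-increasing and the $\inf$ in \eqref{eq=fromMCtoSPI} to be a $\sup$; the statement almost certainly has a typo, and it is worth flagging rather than building on the literal reading.
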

\begin{rmrk}
  This can be compared with known results on the weak Poincaré inequality. In \cite{Zit07annealing}, we showed
  that the WPI is equivalent to the existence of a $C_\kappa$ such that \eqref{eq=MC} holds for sets \emph{larger} than $\kappa$. In other words, the capacity of large sets is controlled by their measure, and the constant becomes worse ($C_\kappa \to 0$) when $\kappa$ gets smaller. 

  Here the situation si reversed: there is a constant $C$ such that \eqref{eq=MC} holds for any $A$; however, if we restrict our attention to small sets, the constant gets better ($C_\kappa \to \infty$).

  Finally, note that if MC holds for any $(\kappa,C_\kappa)$, we can always find $C_\kappa'\leq C_\kappa$ such that $\kappa \mapsto \frac{\kappa}{C_\kappa'}$ is non increasing, and MC holds for $(\kappa, C_\kappa')$. 
\end{rmrk}
\begin{rmrk}
  The factor $8$ in equation \eqref{eq=fromMCtoSPI} is unfortunate. It could probably be changed into $4$ (by looking carefully at the proofs in \cite{BCR07}). The loss is probably inherent to our technique: even in the simpler case of the Poincaré inequality, one looses a constant factor when going from Poincaré to the measure--capacity inequality, and then back. Indeed, the constant $4$ in \eqref{eq=CMPoincare} is optimal (see the remarks on proposition $13$ in \cite{BCR05b}, which refers to \cite{Maz85}). 
\end{rmrk}
\begin{proof}
  [Proof of theorem \ref{thm=CMSPI}] This is a rephrasing of theorem 1 and corollary $6$ from \cite{BCR07}. Indeed, if our hypotheses are satisfied, and we define $\beta_{BCR}(s) = (C_{1/s})^{-1}$, then $\beta_{BCR}(s)$ is non-increasing, $s\mapsto s\beta_{BCR}(s)$ is non decreasing, and, for all $A$, 
  \[ \cpc_\mu(A) \geq \frac{\mu(A)}{\beta(1/\mu(A))}.\]
  Thanks to corollary $6$ of \cite{BCR07}, this implies a SPI with
  \[ \beta(r) = \inf \{ s, 8\beta_{BCR}(s) \leq r \}.\]
  Since $\beta_{BCR}(s) = (C_{1/s})^{-1}$, we obtain \eqref{eq=fromMCtoSPI}.
\end{proof}

We now give a converse statement to theorem \ref{thm=CMSPI}, and prove a measure capacity inequality, starting from an SPI. 

\begin{thrm}
  \label{thm=SPIimpliesMC}
  Suppose that the (partial) S.P.I. \eqref{eq=defSPI} holds (for $r>r_0$). Then, for any $\kappa$, there is a $C_\kappa$ such that \eqref{eq=MC} holds:
  \[ \forall A, \mu(A) \leq \kappa \implies  \cpc_\mu(A) \geq C_\kappa \mu(A). \]

  Moreover, one can choose $C_\kappa$ such that $C_\kappa \to  \frac{1}{r_0}$ when $\kappa \to 0$. In particular, $C_\kappa \to \infty$ if the full SPI holds.
\end{thrm}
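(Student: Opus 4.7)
The plan is to apply the super Poincar\'e inequality directly to functions admissible for $A$ and extract the desired lower bound via a Cauchy--Schwarz estimate (as in Lemma \ref{lmm=lemmeSympa}), together with a truncation argument to reach the $\kappa \to 0$ limit. Given $f$ admissible for $A$ and $r > r_0$, combining the SPI with the bound $(\int |f| \dmu)^2 \leq \mu(\supp f) \int f^2 \dmu$ yields
\[
  (1 - \beta(r) \mu(\supp f)) \int f^2 \dmu \leq r \int |\nabla f|^2 \dmu,
\]
and since $f \geq \ind{A}$ forces $\int f^2 \dmu \geq \mu(A)$, we obtain $\int |\nabla f|^2 \dmu \geq (1 - \beta(r)\mu(\supp f))\, \mu(A)/r$ whenever $\beta(r) \mu(\supp f) < 1$. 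The admissibility constraint $\mu(\supp f) \leq 1/2$, together with a choice of $r$ satisfying $\beta(r) < 2$, then yields a baseline measure--capacity inequality with constant $(1 - \beta(r)/2)/r$ independent of $\kappa$, which settles the first part of the theorem.

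To upgrade this into $C_\kappa \to 1/r_0$, the idea is to replace the support bound $1/2$ by $\kappa$: then taking $r \to r_0^+$ with $\beta(r)\kappa \to 0$ (possible since $\kappa$ is independent of $r$) would push the constant to $1/r_0$. We reach this setting by truncating $f$: set $\tilde f = (f - c)_+/(1 - c)$ for some $c \in [0,1)$. Since $f \equiv 1$ on $A$, $\tilde f$ is again admissible for $A$, its support equals $\{f > c\}$, and $\int |\nabla \tilde f|^2 \dmu \leq (1-c)^{-2} \int |\nabla f|^2 \dmu$. Choosing $c$ so that $\mu(\{f > c\}) \leq \kappa$ and applying the small-support bound to $\tilde f$ gives
\[
  \int |\nabla f|^2 \dmu \geq (1-c)^2 \, \frac{1 - \beta(r)\kappa}{r} \, \mu(A).
\]

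The main obstacle is to control the inflation factor $(1-c)^{-2}$, since in the worst case $c$ must approach $1$ in order to reduce the support of $\tilde f$ below $\kappa$. One overcomes this by a Chebyshev-type bound $\mu(\{f > c\}) \leq \int f^2 \dmu / c^2 \leq 1/(2 c^2)$ (using $f^2 \leq f$ and $\mu(\supp f) \leq 1/2$), which keeps $c$ away from $1$ when $\kappa$ is not too small, and by falling back on the baseline inequality from the first part in the remaining regime. A careful optimization of $r$ and $c$ as functions of $\kappa$ then delivers $C_\kappa \to 1/r_0$ as $\kappa \to 0$, with $C_\kappa \to \infty$ in the case $r_0 = 0$ of a full SPI.
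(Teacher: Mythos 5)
Your baseline part is fine and matches the paper's preliminary observation (SPI $\Rightarrow$ Poincaré $\Rightarrow$ a $\kappa$--independent measure--capacity constant). The truncation $\tilde f = (f-c)_+/(1-c)$ is also the correct device, and is exactly what the paper uses in one of its two cases. The gap is in the last paragraph, and it is not merely a matter of ``careful optimization'': the Chebyshev control is a non-starter, and the proposed fallback cannot produce the limit $C_\kappa \to 1/r_0$.

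Concretely, the bound $\mu(\{f>c\}) \leq 1/(2c^2)$ is \emph{weaker} than the trivial bound $\mu(\{f>c\}) \leq \mu(\supp f) \leq 1/2$ for every admissible $c\in(0,1)$, since $1/(2c^2) \geq 1/2$ precisely when $c\leq 1$. So it never forces a small support for $\tilde f$; to get $1/(2c^2)\leq\kappa$ for $\kappa<1/2$ you would need $c>1$. Thus in the whole relevant regime $\kappa \leq 1/2$ the truncation, as you control it, gives nothing beyond the $1/2$ bound you already had, and you are thrown onto the fallback. But the fallback is the baseline measure--capacity inequality $\cpc_\mu(A)\geq C_0\,\mu(A)$ with a fixed $C_0 = (1-\beta(r)/2)/r$, which does \emph{not} depend on $\kappa$: it cannot tend to $1/r_0$, and when $r_0 = 0$ it remains finite, contradicting the required $C_\kappa\to\infty$. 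The problem is precisely the admissible functions that stay close to $1$ on a set of measure close to $1/2$; for these, no sublevel truncation with $c<1$ has small support.

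The missing idea is that for such $f$ one should \emph{not} compare $\int|\nabla f|^2$ to $\mu(A)$ via the baseline on $A$, but to the measure of a \emph{large} level set. This is what the paper does: fix $\psi$ with $\psi(\kappa)\to 0$ and $\psi(\kappa)/\kappa\to\infty$, pick $b$ so that $\mu(B) := \mu(\{f\geq b\}) = \psi(\kappa)$, and split on a threshold $b^\star$. If $b\geq b^\star$ then $f/b$ is admissible for the \emph{large} set $B$, so $\int|\nabla f|^2 \geq (b^\star)^2\,\cpc_\mu(B) \geq (b^\star)^2\, C\,\psi(\kappa) \gg C\,\mu(A)$; this is the first branch of the min, and it tends to $\infty$. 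If $b< b^\star$ one truncates exactly as you do, but now $\mu(\supp\tilde f) = \psi(\kappa)\to 0$ by construction and the inflation factor $(1-b)^{-2}\leq (1-b^\star)^{-2}$ is uniformly bounded; this branch tends to $(1-b^\star)^2/r_0$, and letting $b^\star\to 0$ gives the stated limit. In short, your proposal is missing the dichotomy and, crucially, the use of the measure--capacity inequality on the large level set $B$ rather than on $A$ in the ``hard'' regime; without it, the fallback constant does not improve as $\kappa\to 0$.
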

\begin{proof}
  Let us first note that, since SPI implies a Poincaré inequality, we already know that the measure capacity inequality \eqref{eq=MC} holds, with $C_\kappa = C$ independent of $\kappa$; therefore our goal is to prove a better estimate on the capacity of small sets. 

  The idea is as follows: consider a ``small'' set $A$. To estimate its capacity, we study functions $f$ such that $\ind{A}\leq f \leq 1$. Let us consider a ``good'' function, \ie{} one who approaches the infimum in the definition of capacity (eq. \eqref{eq=defCapacity} ). 
  Two cases are possible: either $f$ stays close to $1$ on a large set ($B$), or it decreases sharply, just outside $A$ (see figure \ref{fig=fTilde}). In the first case, $f$ resembles an admissible function for the large set, therefore
  \[
  \cpc_\mu(A) \approx \int \abs{\nabla f}^2 d\mu \geq \cpc_\mu(B) \geq C_{\mu(B)} \cdot \mu(B) \gg C_{\mu(B)} \mu(A),
  \]
  and we are in good shape. In the second case, since $f$ decreases sharply, its ``support'' is morallly small, and we can use the SPI to get a good bound on the capacity, in the vein of lemma \ref{lmm=lemmeSympa}. 

  \begin{figure}
    \caption{The two cases for the definition of $\tilde{f}$\label{fig=fTilde}}
    \beginpgfgraphicnamed{figure1}
\begin{tikzpicture}
  \draw[very thick,|-|] (-2,0) --  node [below] {$A$} (0,0);
  \draw[very thick,|-|] (-5,-0.7) -- node [below] {$B$} (2,-0.7);
  \draw[thick] (-9,0) -- (-7,0)
  to [out=right,in=190]
  (-5,1.75)
  to [out=10,in=left]
  (-2,2) -- (0,2)
  to [out=right,in=160]
  (2,1.75)
  to [out=-20,in=left]
  (5,0) -- (6,0);
  \draw[densely dashed] (-9,0)--(-7,0)
  to [out=right,in=191.4]
  (-5,2)--(2,2)
  to [out=-22.8, in=left]
  (5,0);
  \draw [help lines] (-5,1.75)-- (5,1.75) node [below right,black] {$b$};
  \draw [help lines] (-7,0)   -- (5,0);
  \draw [help lines] (-2,2)   -- (5,2)    node [right,black] {$1$};

\end{tikzpicture}
\endpgfgraphicnamed

\beginpgfgraphicnamed{figure2}
\begin{tikzpicture}
  \draw[very thick,|-|] (-2,0)    -- node [below,black] {$A$} (0,0);
  \draw[very thick,|-|] (-5,-0.7) -- node [below,black] {$B$} (2,-0.7);
  \draw[thick] (-9,0) -- (-7,0)
  to [out=right,in=210]
  (-5,0.6)
  to [out=30,in=left]
  (-2,2) -- (0,2)
  to [out=right,in=150]
  (2,0.6)
  to [out=-30,in=left]
  (5,0) -- (6,0);
  \draw[densely dashed] (-5,0)
  to [out=42.9,in=left]
  (-2,2)--(0,2)
  to [out=right, in=137.14]
  (2,0);
  \draw [help lines] (-5,0.6)-- (5,0.6) node [right,black] {$b$};
  \draw [help lines] (-7,0)  -- (5,0);
  \draw [help lines] (-2,2)  -- (5,2) node [right,black] {$1$};

\end{tikzpicture}
\endpgfgraphicnamed

    \small
    The function $f$ is the solid line, $\tilde{f}$ is the dashed line. In the first case, $\tilde{f}$ is an admissible function for the large set $B$. In the second case, $f$ decays sharply outside $A$, so it is well approximated by a function $\tilde{f}$ with small support.
  \end{figure}
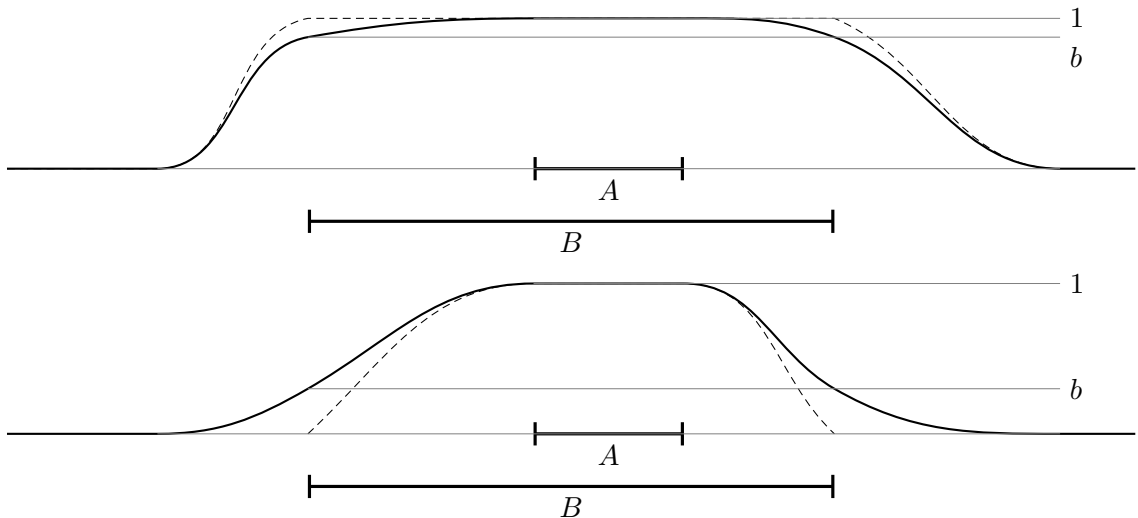
  Let us now be more formal. Let $\psi$ be any function such that $\psi(x) \to 0$, $\psi(x)/x \to \infty$ when $x$ goes to zero (\eg{} $\psi(x) = x\log(1/x$: this particular choice will be used later). Fix $\kappa$, and consider  a set $A$ such that $\mu(A) \leq \kappa$. Let $f$ be an admissible function for $A$. Finally, consider the level sets $B = \{ f\geq b\}$, and choose $b$ such that $\mu(B) = \psi(\kappa)$ (therefore $B$ is large, compared to $A$).

  Let us fix a threshold $\bstar \in (0,1)$.  If $b\geq \bstar$, $f$ stays (relatively) large on the ``large set'' $B$. Let $\tilde{f} = f/b$: $\tilde{f}$  is admissible for $B$, therefore
  \begin{align}
    \int \abs{\nabla{f}}^2 \dmu 
    &\geq b^2\int_{\Omega\setminus B}\abs{ \nabla{\tilde{f}}}^2
    &\text{(def. of $\tilde{f}$)} \notag \\
    &\geq b^2\cpc_\mu(B)
    &\text{($\tilde{f}$ is admissible for $B$)} \notag\\
    &\geq b^2 C \psi(\kappa)
    &\text{(M--C for $B$, with constant $C$)} \notag\\
    &\geq (\bstar)^2 \frac{C\psi(\kappa)}{\kappa} \kappa \notag \\
    \label{eq=minor_cap_1}
    &\geq (\bstar)^2 \frac{C\psi(\kappa)}{\kappa} \mu(A).
  \end{align}

  If $b\leq \bstar$, $f$ decreases sharply: we use the SPI. Define $\tilde{f} = (f-b)/(1-b) $ on $B$, $\tilde{f}=0 $ on $\compl{B}$ (see figure \ref{fig=fTilde}). For this choice, $\mu(\supp(\tilde{f})) \leq \mu(B) = \psi(\kappa)$, and $\mu(\tilde{f}^2) \geq \mu(A)$. Thanks to lemma \ref{lmm=lemmeSympa}, we get, for any $r>r_0$:
  \begin{equation}
    \int \abs{\nabla \tilde{f}}^2 \dmu \geq \frac{1}{r} \left( 1 - \beta(r) \psi(\kappa) \right) \cdot \mu(A)
    \label{eq=utilisationLemmeSympa}
  \end{equation}
  Going back to $f$, we obtain:
  \begin{align}
    \notag
    \int \abs{\nabla{f}}^2 \dmu 
    &\geq \int_B \abs{\nabla{f}}^2 \dmu  & \\
    \notag
    &\geq (1-b)^2\int_B \abs{\nabla \tilde{f}}^2 \dmu  & \text{(def. of $\tilde{f}$)}\\
    &\geq (1-b)^2 \frac{1 - \beta(r) \psi(\kappa)}{r} \mu(A).
  \end{align}
  Since $b$ is smaller than $\bstar$ and $r$ is arbitrary, 
  \begin{equation}
    \int \abs{\nabla f}^2 \dmu 
    \geq (1 - \bstar)^2 \left(\sup_{r > r_0} \frac{ 1 - \beta(r) \psi(\kappa)}{r}  
    \right)\mu(A).
    \label{eq=minor_cap_2}
  \end{equation}

  Now for any admissible $f$, one of \eqref{eq=minor_cap_1} or \eqref{eq=minor_cap_2} holds:
  \[ \int \abs{\nabla f}^2 \dmu \geq 
   \min\left(
    \frac{C\psi(\kappa)}{\kappa} \bstar^2 ,
    \sup_{r>r_0} \frac{ 1 - \beta(r) \psi(\kappa)}{r}(1 - \bstar)^2 
  \right) \cdot \mu(A).
  \]
  Taking the infimimum over all admissible $f$, we show that \eqref{eq=MC} holds, and we can choose
  \[ C_\kappa = 
   \min\left(
    \frac{C\psi(\kappa)}{\kappa} \bstar^2 ,
    \sup_{r>r_0} \frac{ 1 - \beta(r) \psi(\kappa)}{r}(1 - \bstar)^2 
  \right)
  \]
  The first term in the minimum goes to infinity when $\kappa \to 0$, by our choice of $\psi$. 
  Since $\psi(\kappa) \to 0$, it is easy to see that $\sup_{r>r_0} \frac{1 - \beta(r) \psi(\kappa)^2}{r}$ goes to $\frac{1}{r_0}$ when $\kappa \to 0$. Therefore, $\liminf_{\kappa \to 0} C_\kappa \geq (1 - \bstar)^2 \frac{1}{r_0}$. Since $\bstar$ is arbitrary, we may send it to zero and get the final result.
\end{proof}

\subsection{Super Poincaré with Orlicz norms}
We are now ready to prove our main result about super Poincaré inequalities with Orlicz norms.

Let us briefly recall basic facts about Orlicz spaces (see appendix A in \cite{RZ07} for more details).
An even, convex function satisfying $\Phi(0) = 0$, $\lim_{x\to\infty} \Phi(x) = \infty$  is called a Young function. To any such function we associate the Orlicz space
\[ L_\Phi = \left\{ f, \exists \lambda, \int \Phi(\abs{f}/\lambda) \dmu \leq 1\right\}.\]
If we define the ``Luxembourg norm'' $\norm{\Phi}{f}$ to be the smallest possible $\lambda$ in the previous equation, $(L_\Phi,\norm{\Phi}{\cdot})$ becomes  a Banach space. This norm is equivalent to  the Orlicz norm, defined by duality:
\[ \mathcal{N}_\Phi(f) = \sup \left\{ \int fg \dmu, g\in M_{\PhiStar} \right\} \]
where $M_{\PhiStar} = \left\{ g, \int \PhiStar(\abs{g})\dmu \leq 1\right\}$, and $\PhiStar$ is the Legendre transform of $\Phi$. More precisely, 
\begin{equation}
  \label{eq=eqNormes}
  \forall f \in L_\Phi, \quad \norm{\Phi}{f} \leq \mathcal{N}_\Phi(f) \leq 2 \norm{\Phi}{f}.
\end{equation}
We now state our result.
\begin{thrm}
  \label{thm=OSPI}
  Suppose that $(\Phi,\PhiStar)$ is a dual pair of finite Young functions, satisfying
  $\lim_{x\to\infty}\frac{\PhiStar(x)}{x^2} = \infty$. Suppose additionally that $x\mapsto\PhiStar( \sqrt{x})$ is also a Young function. 

  Then the (partial) Orlicz SPI:
  \begin{equation}
    \forall r>r_0, \quad 
    \int f^2 \dmu \leq  r \int \abs{\nabla f}^2 \dmu + \beta(r) \norm{\Phi}{f}^2
    \label{eq=defOSPI}
  \end{equation}
  implies a (partial) SPI, valid for $r>8r_0$, and with another function $\tilde{\beta}$, which depends explicitely on $\beta$ and $\Phi$. 
\end{thrm}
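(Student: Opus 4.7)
The plan is to route through the measure--capacity inequality, following the strategy of theorems \ref{thm=CMSPI} and \ref{thm=SPIimpliesMC}: first show that the Orlicz SPI implies an MC bound whose constants $C_\kappa$ improve as $\kappa \to 0$, then convert this back to a classical SPI via theorem \ref{thm=CMSPI}. The only new analytic ingredient needed is an Orlicz--$L^2$ replacement for the elementary Hölder bound of lemma \ref{lmm=lemmeSympa}(1).

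The key lemma I would first establish is the small--support estimate
\[
\norm{\Phi}{f}^2 \leq \alpha(\mu(\supp f)) \int f^2 \dmu,
\qquad \text{with } \alpha(s) \to 0 \text{ as } s \to 0.
\]
To obtain it, I would start from the dual definition $\mathcal{N}_\Phi(f) = \sup_{g \in M_{\PhiStar}} \int fg \dmu$, use Cauchy--Schwarz to bound $\int fg \dmu \leq \norm{2}{f}\,(\int_{\supp f} g^2 \dmu)^{1/2}$, and then control the remaining factor by applying Jensen's inequality to the convex function $\Psi(x) = \PhiStar(\sqrt{x})$ against the probability measure $\mu(\supp f)^{-1}\ind{\supp f}\dmu$. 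This yields $\int_{\supp f} g^2 \dmu \leq \mu(\supp f)\cdot((\PhiStar)^{-1}(1/\mu(\supp f)))^2$ for every $g \in M_{\PhiStar}$. Combining with the equivalence \eqref{eq=eqNormes} gives the estimate with $\alpha(s) = s\cdot((\PhiStar)^{-1}(1/s))^2$, and the hypothesis $\PhiStar(x)/x^2 \to \infty$ translates exactly to $\alpha(s) \to 0$.

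From here the argument essentially copies that of theorem \ref{thm=SPIimpliesMC}. Plugging the small--support estimate into the Orlicz SPI \eqref{eq=defOSPI} yields the analogue of lemma \ref{lmm=lemmeSympa}(2):
\[
(1 - \beta(r)\alpha(\mu(\supp f))) \int f^2 \dmu \leq r \int \abs{\nabla f}^2 \dmu.
\]
A median cut then produces a classical Poincaré inequality (since $\beta(r)\alpha(1/2) < 1$ for $r$ large enough), hence a baseline measure--capacity bound for $\kappa \leq 1/2$. Running the two-case analysis of theorem \ref{thm=SPIimpliesMC}---with the level $b$ defined by $\mu\{f \geq b\} = \psi(\kappa)$ and an auxiliary threshold $\bstar$---upgrades this to measure--capacity constants $C_\kappa$ with $C_\kappa \to 1/r_0$ as $\kappa \to 0$. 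Feeding these $C_\kappa$ into theorem \ref{thm=CMSPI} then produces a classical partial SPI valid for $r > 8 r_0$, with $\tilde\beta$ explicit in terms of $\beta$ and $\Phi$ through \eqref{eq=fromMCtoSPI}.

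The main obstacle is really just the first step: producing the Orlicz--to--$L^2$ small--support bound. The two hypotheses on $\Phi$ are precisely tailored to this step---convexity of $\PhiStar(\sqrt{\cdot})$ makes Jensen's inequality applicable to $g^2$, and $\PhiStar(x)/x^2 \to \infty$ ensures that $\alpha$ actually vanishes at $0$. Once this is in hand, the rest of the proof is a faithful rerun of the earlier $L^1$ arguments with $\mu(\supp f)$ replaced throughout by $\alpha(\mu(\supp f))$.
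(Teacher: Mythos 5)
Your proposal is correct and follows essentially the same architecture as the paper's own proof: an Orlicz small-support estimate replacing part (1) of lemma~\ref{lmm=lemmeSympa}, a rerun of the two-case argument of theorem~\ref{thm=SPIimpliesMC} to obtain measure--capacity constants $C_\kappa\to 1/r_0$, and finally theorem~\ref{thm=CMSPI} to return to a classical SPI valid for $r>8r_0$. The one place where you diverge in execution is the derivation of the small-support bound, and it is worth flagging because your version appears to be the correct one. The paper (lemma~\ref{lmm=lemmeSympaOrlicz}) reaches the Orlicz norm of $\ind{A}$ through duality and then invokes the closed formula $\norm{(\PhiStar\circ\sqrt{\cdot})^*}{\ind{A}} = \frac{1}{\PhiStar(\sqrt{1/\mu(A)})}$, yielding $\theta^2(s) = \frac{4}{\PhiStar(\sqrt{1/s})}$. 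You instead apply Jensen's inequality to the convex function $\PhiStar\circ\sqrt{\cdot}$ against the conditioned measure $\mu(\cdot\,|\,A)$, giving $\int_A g^2\dmu \leq \mu(A)\bigl((\PhiStar)^{-1}(1/\mu(A))\bigr)^2$ for $g\in M_{\PhiStar}$, hence $\alpha(s) = s\bigl((\PhiStar)^{-1}(1/s)\bigr)^2$. These are genuinely different functions, and $\alpha$ is the consistent one: for $\PhiStar(x)=x^p/p$ one has $\alpha(s)\asymp s^{1-2/p}$, which is exactly the Hölder exponent and is saturated by $f=\ind{A}$, whereas the paper's $\theta^2(s)=4ps^{p/2}$ is already violated by indicators when $\mu(A)$ is small. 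The correct formula is $\norm{\Psi}{\ind{A}} = \frac{1}{\Psi^{-1}(1/\mu(A))}$ with $\Psi = (\PhiStar\circ\sqrt{\cdot})^*$, and together with $y\leq \Psi^{-1}(y)(\Psi^*)^{-1}(y)\leq 2y$ this recovers your $\alpha$ up to a factor; the paper seems to have substituted $\Psi^*$ for $\Psi^{-1}$. Your derivation also makes visible why the hypothesis $\PhiStar(x)/x^2\to\infty$ is required (it is equivalent to $\alpha(s)\to 0$), whereas the paper's $\theta\to 0$ already under the mere finiteness of $\Phi$, so the hypothesis is never actually used there. Two small cosmetic remarks: the final appeal to the equivalence~\eqref{eq=eqNormes} is unnecessary, since the opening bound $\norm{\Phi}{f}\leq\mathcal{N}_\Phi(f)$ is the only duality input you use; and the conclusion of theorem~\ref{thm=OSPI} does not depend on the exact form of $\alpha$ versus $\theta^2$, so the theorem itself stands either way --- the discrepancy only matters for explicit constants, as in section~\ref{sec=example}.
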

The growth conditions on $\Phi$ imply the following inclusions between spaces:
\[ L^\infty \subsetneq L_{\PhiStar} \subsetneq L^2 \subsetneq L_{\Phi} \subsetneq L^1.\]
In a sense, we have generalized the SPI for any ``reasonable'' norm between $L^1$ and $L^2$. 
The proof of theorem \ref{thm=OSPI} relies on the following generalization of lemma \ref{lmm=lemmeSympa}:
\begin{lmm}
  \label{lmm=lemmeSympaOrlicz}
  Let $\Phi$ and $\PhiStar$ be as in theorem \ref{thm=OSPI}.  
  Then
  \begin{equation}
    \label{eq=lemmeSympaOrliczUn}
  \norm{\Phi}{f} \leq \norm{2}{f} \theta(\mu(\supp f)),
\end{equation}
  where $\theta(x) = \frac{2}{\PhiStar\left(\sqrt{x^{-1}}\right)^{1/2}} \to 0$ when $x\to 0$.  If additionally, the Orlicz--SPI \eqref{eq=defOSPI} holds, then 
  \begin{equation}
    \label{eq=lemmeSympaOrliczDeux}
  \left( 1 - \beta(r) \theta^2\left(\mu(\supp f)\right)\right) \int f^2 \dmu 
  \leq r \int \abs{\nabla f}^2 \dmu.
\end{equation}
\end{lmm}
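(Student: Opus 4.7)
The second assertion follows immediately from the first: squaring \eqref{eq=lemmeSympaOrliczUn} gives $\norm{\Phi}{f}^2 \leq \theta(\mu(\supp f))^2 \int f^2 \dmu$, and substituting this into the Orlicz SPI \eqref{eq=defOSPI} and moving the $L^2$ term to the left-hand side yields \eqref{eq=lemmeSympaOrliczDeux}. This parallels the second part of Lemma~\ref{lmm=lemmeSympa}, with $\beta(r)\mu(\supp f)$ simply replaced by $\beta(r)\theta(\mu(\supp f))^2$.

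For the first assertion, my approach is to bound $\norm{\Phi}{f}$ by duality. The comparison \eqref{eq=eqNormes} gives $\norm{\Phi}{f} \leq \mathcal{N}_\Phi(f)$, and the variational description of $\mathcal{N}_\Phi$ reduces the task to estimating $\int fg \dmu$ uniformly over $g \in M_{\PhiStar}$. Writing $A = \supp f$ and applying Cauchy--Schwarz on $A$ gives
\[
\int fg \dmu = \int_A fg \dmu \leq \norm{2}{f}\left(\int_A g^2 \dmu\right)^{1/2},
\]
so it suffices to control the $L^2(A)$-mass of $g$ using only the constraint $\int \PhiStar(\abs{g}) \dmu \leq 1$.

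This is exactly where the hypothesis that $\Psi(x) := \PhiStar(\sqrt{x})$ is itself a Young function --- hence convex with $\Psi(0) = 0$ --- plays its decisive role. I would apply Jensen's inequality to $\Psi$ against the probability measure $\ind{A}\dmu/\mu(A)$, obtaining
\[
\PhiStar\!\left(\sqrt{\frac{1}{\mu(A)}\int_A g^2 \dmu}\right) = \Psi\!\left(\frac{1}{\mu(A)}\int_A g^2 \dmu\right) \leq \frac{1}{\mu(A)}\int_A \Psi(g^2)\dmu = \frac{1}{\mu(A)}\int_A \PhiStar(\abs{g})\dmu \leq \frac{1}{\mu(A)};
\]
inverting $\PhiStar$ then bounds $(\int_A g^2 \dmu)^{1/2}$ by an explicit function of $\mu(A)$, which together with the factor $2$ from \eqref{eq=eqNormes} produces the $\theta$ announced in the statement. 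The claim $\theta(x) \to 0$ as $x \to 0$ is then a direct consequence of the growth hypothesis $\PhiStar(x)/x^2 \to \infty$, which is precisely equivalent to $(\PhiStar)^{-1}(y) = o(\sqrt{y})$ at infinity. The only genuinely substantive step of the whole argument is this single Jensen application, where the auxiliary convexity of $\PhiStar \circ \sqrt{\cdot}$ is really used to trade the $\PhiStar$-integral constraint for a pointwise $L^2$ control on the support; every other ingredient is either a definition or a purely formal rearrangement.
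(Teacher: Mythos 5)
Your Jensen argument is a genuinely different (and cleaner) route than the paper's. After the same Cauchy--Schwarz step, the paper re-dualizes: it notes that $\{g^2:g\in M_{\PhiStar}\}\subseteq M_{\PhiStar\circ\sqrt{\cdot}}$, bounds the resulting supremum by $\bigl(\mathcal{N}_{(\PhiStar\circ\sqrt{\cdot})^*}(\ind{A})\bigr)^{1/2}\leq\bigl(2\norm{(\PhiStar\circ\sqrt{\cdot})^*}{\ind{A}}\bigr)^{1/2}$, and then evaluates the Luxemburg norm of an indicator. Your one-step Jensen application replaces that second dualization and avoids the factor $2$ from \eqref{eq=eqNormes}, so it is both shorter and gives a slightly sharper constant. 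However, your closing claim that this ``produces the $\theta$ announced in the statement'' is not accurate: carrying out your bound explicitly yields $\theta(x)=\sqrt{x}\,(\PhiStar)^{-1}(1/x)$, whereas the statement has $\theta(x)=2\,\PhiStar(\sqrt{1/x})^{-1/2}$. These are genuinely different; for $\PhiStar(u)=u^p/p$ the first scales as $x^{1/2-1/p}$ and the second as $x^{p/4}$.

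In fact yours is the correct formula and the statement's appears to contain a slip. Taking $g$ constant on $A$ shows $\sup_{g\in M_{\PhiStar}}\bigl(\int_A g^2\dmu\bigr)^{1/2}=\sqrt{\mu(A)}\,(\PhiStar)^{-1}(1/\mu(A))$ exactly, and $f=\ind{A}$ then shows that $\norm{\Phi}{f}/\norm{2}{f}$ is of this order (up to a universal factor), so the printed $\theta$, which decays strictly faster as $\mu(A)\to 0$, is actually violated for small sets. The source of the discrepancy is the indicator-norm identity: the Luxemburg norm of $\ind{A}$ for a Young function $\Xi$ is $1/\Xi^{-1}(1/\mu(A))$, and for $\Xi=(\PhiStar\circ\sqrt{\cdot})^*$ this is comparable, via $t\leq\Xi^{-1}(t)(\Xi^*)^{-1}(t)\leq 2t$, to $\mu(A)\bigl[(\PhiStar)^{-1}(1/\mu(A))\bigr]^2$, not to $1/\PhiStar(\sqrt{1/\mu(A)})$. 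Since only the qualitative property $\theta(x)\to 0$ as $x\to 0$ is used downstream, and your $\theta$ still enjoys it by the hypothesis $\PhiStar(x)/x^2\to\infty$ (as you correctly observe), the structure of the paper is unaffected; but the explicit $\theta^2=4p\,x^{p/2}$ invoked later in Section 5 should read $\theta^2(x)=p^{2/p}x^{1-2/p}$. Your deduction of \eqref{eq=lemmeSympaOrliczDeux} from \eqref{eq=lemmeSympaOrliczUn} is the same one-line rearrangement as the paper's.
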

\begin{proof}
  [Proof of the lemma]
  Let $A$ be the support of $f$. 
  By \eqref{eq=eqNormes} and the definition of the Orlicz norm, 
  \[ \norm{\Phi}{f} \leq \mathcal{N}_\Phi(f) 
  = \sup \left\{ \int fg \dmu, g \in M_{\Phi^*} 
  \right\}.
  \]
  Let us apply Hölder's inequality: 
  \begin{align*}
    \norm{\Phi}{f} 
    &\leq \norm{2}{f} \sup \left\{ \sqrt{\int g^2 \ind{A} \dmu}, g \in M_{\Phi^*} \right\} \\
    &\leq \norm{2}{f} \sup \left\{ \int h   \ind{A} \dmu, h \in M_{\Phi^*\circ\sqrt{\cdot}}\right\}^{1/2} \\
    &\leq \norm{2}{f} \left(\mathcal{N}_{\PhiStar\circ\sqrt{\cdot}}(\ind{A})\right)^{1/2} \\
    &\leq 2\norm{2}{f}\left( \norm{(\Phi^*\circ \sqrt{\cdot})^*}{\ind{A}}\right)^{1/2}.
  \end{align*}
  The Luxembourg norm of an indicatrix function is known, and depends only on the measure of the set: 
  \[\norm{(\Phi^*\circ \sqrt{\cdot})^*}{\ind{A}} = \frac{1}{\PhiStar(\sqrt{\mu(A)^{-1}})}.\]
  This proves the first inequality \eqref{eq=lemmeSympaOrliczUn}. Since $\Phi$ is finite, $\lim_{x\to\infty} \PhiStar(x)  = \infty $, so $\lim_{x\to 0}\theta(x) = 0$. 
  The second inequality is then a direct consequence of \eqref{eq=lemmeSympaOrliczUn} and  \eqref{eq=defOSPI}.
\end{proof}

\begin{proof}[Proof of theorem \ref{thm=OSPI}]
  We begin by proving a measure--capacity inequality. This follows the lines of the proof of theorem \ref{thm=SPIimpliesMC}. 
  In that proof, we see that the SPI is only used once, in equation \eqref{eq=utilisationLemmeSympa}, through lemma \ref{lmm=lemmeSympa}. If we apply the ``Orlicz version'' of the lemma (lemma \ref{lmm=lemmeSympaOrlicz}), we simply replace $\psi(\kappa)$ by $\theta^2(\psi(\kappa))$ and get: 
  \[
  C_\kappa \geq
   \min\left(
    \frac{C\psi(\kappa)}{\kappa} \bstar^2 ,
    \sup_{r>r_0} \frac{ 1 - \beta(r) \theta^2(\psi(\kappa))}{r}(1 - \bstar)^2 
  \right)
  \]
  Since $\lim_{\kappa \to 0} \theta^2(\psi(\kappa)) = 0$, we can conclude just as in the proof of theorem \ref{thm=SPIimpliesMC}: the measure--capacity $MC(\kappa,C_\kappa)$ holds for some $C_\kappa$ such that $\lim C_\kappa = \frac{1}{r_0}$.

  Now we can apply theorem \ref{thm=CMSPI} to get back the classical super Poincaré inequality. Note however that we only obtain it  for $r>8 r_0$.  
\end{proof}

\section{From the spectrum to super Poincaré inequalities}
\label{sec=WangRevisited}
We now  use the Orlicz-SPI developed in the previous section to give an alternative proof of the following result: 
\begin{thrm}
  \label{thm=fromSpecToSPI}
  [\cite{Wan00Essential,GW02}] Suppose that the essential spectrum of $(-L)$ is included in $[r_0, \infty)$ for some (possibly infinite) $r_0>0$. Then the super Poincaré inequality \eqref{eq=defSPI} holds, for some function $r\mapsto\beta(r)$, defined for $r>8r_0$. 
\end{thrm}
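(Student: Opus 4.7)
The plan is to deduce the SPI \eqref{eq=defSPI} from its Orlicz variant \eqref{eq=defOSPI} via Theorem~\ref{thm=OSPI}; the factor $8$ in the admissible threshold is then nothing but the factor $8$ appearing in that theorem. So it suffices to establish an Orlicz-SPI directly from the spectral hypothesis, for some Young pair $(\Phi,\PhiStar)$ meeting the conditions of Theorem~\ref{thm=OSPI}.

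The key tool is the spectral decomposition of the self-adjoint operator $-L$. For any $s<r_0$, the hypothesis that the essential spectrum lies in $[r_0,\infty)$ forces the spectral projector $E_s := \mathbf{1}_{[0,s)}(-L)$ to have finite rank $N(s)$; let $e_1,\dots,e_{N(s)}$ be an $L^2(\mu)$-orthonormal basis of $\mathrm{Im}(E_s)$ consisting of eigenfunctions of $-L$. For an arbitrary $f$, split $f = E_s f + (I-E_s)f$: the two pieces are $L^2$-orthogonal and, since $E_s$ commutes with $-L$, so are their Dirichlet forms, while $-L \geq s$ on $\mathrm{Im}(I-E_s)$. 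This yields
\begin{equation*}
  \int f^2 \dmu \;=\; \|E_s f\|_2^2 + \|(I-E_s)f\|_2^2
  \;\leq\; \|E_s f\|_2^2 + \tfrac{1}{s}\int|\nabla f|^2\dmu.
\end{equation*}
The low-frequency piece is then handled by the Orlicz--Hölder inequality \eqref{eq=eqNormes}: $|\langle f,e_i\rangle|\le 2\|f\|_\Phi\|e_i\|_{\PhiStar}$, whence
\begin{equation*}
  \|E_s f\|_2^2 \;=\; \sum_{i=1}^{N(s)}|\langle f,e_i\rangle|^2
  \;\leq\; 4\Bigl(\sum_{i=1}^{N(s)}\|e_i\|_{\PhiStar}^2\Bigr)\|f\|_\Phi^2.
\end{equation*}
Combining the two bounds delivers the partial Orlicz-SPI \eqref{eq=defOSPI} as $s$ ranges over $(0,r_0)$, the function $\tilde\beta$ being read off the estimate above. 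A final invocation of Theorem~\ref{thm=OSPI} converts this into the classical SPI on the claimed range.

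The main obstacle is the choice of Young pair $(\Phi,\PhiStar)$: it must satisfy the growth conditions of Theorem~\ref{thm=OSPI} (\ie{} $\PhiStar(x)/x^2 \to \infty$ and $\PhiStar\circ\sqrt{\,\cdot\,}$ is again a Young function), and, crucially, each low-energy eigenfunction $e_i$ must lie in $L_{\PhiStar}(\mu)$, otherwise the coefficient $\tilde\beta$ is infinite and the estimate empty. For the smooth diffusion $L=\Delta-\nabla V\cdot\nabla$, elliptic regularity together with the fast decay of $e^{-V}$ generally places the eigenfunctions in every $L^p(\mu)$ with $p<\infty$, so a choice such as $\PhiStar(x)=x^2(\log(1+x))^{\alpha}$ with $\alpha>0$ small can satisfy both requirements simultaneously. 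The function $\tilde\beta$ obtained this way depends on the low-lying spectrum of $-L$ and on the $\PhiStar$-norms of the corresponding eigenfunctions, but only its finiteness is needed for the qualitative statement.
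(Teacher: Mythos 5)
Your overall structure matches the paper's proof: spectral splitting $f = E_s f + (I-E_s)f$, the $1/s$ bound on the high-frequency piece via the spectral theorem, the Orlicz--Hölder bound on the finite-rank low-frequency projection, and a final invocation of Theorem~\ref{thm=OSPI}. The one genuine gap is precisely the point you flag as ``the main obstacle'': the existence of a Young pair $(\Phi,\PhiStar)$ satisfying the hypotheses of Theorem~\ref{thm=OSPI} with every low-energy eigenfunction $e_i$ in $L_{\PhiStar}(\mu)$. You argue that elliptic regularity and decay of $e^{-V}$ ``generally'' put the $e_i$ into every $L^p(\mu)$, $p<\infty$, and then propose $\PhiStar(x)=x^2(\log(1+x))^\alpha$. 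That reasoning is not available under the theorem's standing hypotheses: $V$ smooth with $e^{-V}$ integrable guarantees only $e_i\in L^2(\mu)$, and membership in $L^p(\mu)$ for $p>2$ is a growth-at-infinity property that does not follow from interior elliptic regularity. (It does follow from hypercontractivity, i.e.\ log-Sobolev --- which is essentially the kind of conclusion one is trying to reach, so such an argument risks circularity.) As written, you have no control on $\|e_i\|_{\PhiStar}$, so the coefficient $\tilde\beta$ in your Orlicz-SPI may be infinite and the estimate empty.

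The missing ingredient --- and the one the paper uses --- is de la Vallée Poussin's lemma: any \emph{finite} family of $L^1(\mu)$ functions admits a Young function $G$ with $G(x)/x\to\infty$ for which $\sup_i\int G(|\cdot|)\dmu<\infty$. Applying this to the finitely many $e_i^2\in L^1(\mu)$ and setting $\PhiStar(x)=G(x^2)$ produces an Orlicz space strictly between $L^\infty$ and $L^2$ that contains all the $e_i$, \emph{with no regularity or decay hypothesis whatsoever}, and the growth conditions of Theorem~\ref{thm=OSPI} ($\PhiStar(x)/x^2\to\infty$ and $\PhiStar\circ\sqrt{\,\cdot\,}=G$ a Young function) hold by construction. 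This purely measure-theoretic observation is what makes the argument close in the generality claimed; your concrete choice of $\PhiStar$ does not, unless you add a substantive extra hypothesis on $V$.
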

To prove the super Poincaré inequality, we shall use directly the spectral decomposition of the operator $L$, as in \cite{GW02} (section 4, see in particular theorem 4.3). 

Let $r$ and $r_1$ satisfy $r>r_1>r_0$. Since the spectrum is discrete below $\frac{1}{r_0}$, there exists $n(r)$ such that $\sigma(-L) \cap [0,1/r] = \{ 0 = \lambda_1, \lambda_2, \ldots, \lambda_{n(r)}\}$ (if $r_0 = 0$, the sequence $\lambda_k$ goes to infinity). If we denote by $P$ and $Q$ the spectral projections on $[0,r]$ and $(r,\infty)$, we get:
\begin{equation}
  \label{eq=almostOSPI}
  \int f^2 \dmu = \int (Pf)^2 \dmu + \int (Qf)^2 \dmu.
\end{equation}
The term in $Qf$ is easily bounded thanks to the spectral decomposition. If $E_\lambda$ is the resolution of identity of $-L$, 
\begin{align*}
  \int (Qf)^2 \dmu &= \int_{\lambda > 1/r} d(E_\lambda(f),f)_\mu \\
  &\leq r \int_{\lambda >1/r} \lambda d(E_\lambda(f), f)_\mu \\
  &\leq r  (f,-Lf)_\mu = r \int \abs{\nabla f}^2 \dmu.
\end{align*}

The other term is the projection of $f$ on the finite dimensional eigenspace associated to the small eigenvalues. Let $f_1, f_2 \ldots f_{n(r_1)}$ be a sequence of normalized eigenvectors. Thanks to de la Vallée Poussin's lemma, these functions are all in a space smaller than $L^2$: there is a $\PhiStar$, satisfying the hypotheses of theorem \ref{thm=OSPI}, such that $f_i \in L_{\PhiStar}$, for $i = 1,2,\ldots n(r_1)$. 

Therefore, 
\begin{align*}
  \norm{2}{Pf}^2 
  &= \sum_{i=1}^{n(r)} (f,f_i)^2 \\
  &\leq \sum_{i=1}^{n(r)} \norm{\Phi}{f}^2 \norm{\PhiStar}{f_i}^2 \\
  &\leq \left( \sum_{i=1}^{n(r)} \norm{\PhiStar}{f_i}^2 \right) \norm{\Phi}{f}^2.
\end{align*}

Going back to \eqref{eq=almostOSPI}, this yields the Orlicz-SPI: 
\begin{equation}
  \label{eq=OSPIbySpectralMeans}
  \forall r_1>r_0,\exists \Phi,  
\forall r>r_1 ,\exists \beta(r) , \quad
\int f^2 \dmu \leq r D(f,f) + \beta(r) \norm{\Phi}{f}^2,
\end{equation}
with $\beta(r) = \sum_{i = 1}^{n(r)} \norm{\PhiStar}{f_i}^2$. We have shown in the previous section how to go from such an Orlicz--SPI to a classical one: we obtain a SPI valid for all $r>8r_1$. Since $r_1$ is arbitrarily close to $r_0$, this concludes the proof of theorem \ref{thm=fromSpecToSPI}.

\section{Example}
\label{sec=example}
\subsection{Definitions}
We give here a proof of the (well-known!) logarithmic Sobolev inequality for the Ornstein Uhlenbeck semigroup (\ie{} for the Gaussian measure), using knowledge on the spectrum. 
\begin{thrm}
  Let $\gamma_d$ be the standard $d$-dimensional Gaussian measure. Then there is a $C$, independent of $d$, such that for all $f$, 
  \[
  \int f^2 \log \left(f^2 / \int f^2 \,d\gamma_d\right) \,d\gamma_d
  \leq C \int \abs{\nabla f}^2 \, d\gamma_d.
  \]
\end{thrm}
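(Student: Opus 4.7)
Strategy: apply the full machinery of Sections \ref{sec=OSPI} and \ref{sec=WangRevisited}. From the spectral decomposition of the Ornstein--Uhlenbeck generator one extracts an Orlicz--SPI, Theorem \ref{thm=OSPI} converts it into a classical SPI, and one then invokes the known criterion that a SPI whose $\beta$ behaves like $\exp(c+c'/r)$ near $r=0^+$ implies the logarithmic Sobolev inequality. Since the log-Sobolev inequality tensorises with no loss in the constant, it is enough to handle the one-dimensional case and then take a product.

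In dimension one, $L=\partial_x^2-x\partial_x$ is self-adjoint in $L^2(\gamma_1)$ with pure point spectrum $\{0,1,2,\ldots\}$ and normalised Hermite eigenfunctions $\{H_k\}_{k\geq 0}$; in particular $r_0=0$ and the scheme of Section \ref{sec=WangRevisited} applies directly. For the Orlicz function I would take $\PhiStar(x)=\abs{x}^{2+\eps}$ with a fixed small $\eps>0$: this is a Young function, $\PhiStar(x)/x^2\to\infty$, and $x\mapsto \PhiStar(\sqrt{x})=x^{1+\eps/2}$ is convex, so the hypotheses of Theorem \ref{thm=OSPI} are satisfied. The associated Luxembourg norm is nothing but the $L^{2+\eps}$ norm, and Nelson's hypercontractivity applied to $P_tH_k=e^{-kt}H_k$ with $e^{2t}=1+\eps$ gives the crucial bound $\norm{\PhiStar}{H_k}^2\leq(1+\eps)^k$.

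Running the argument of Section \ref{sec=WangRevisited} verbatim then yields, for every $r>0$, an Orlicz--SPI with
\[
\beta(r) \;=\; \sum_{k=0}^{\lfloor 1/r\rfloor} \norm{\PhiStar}{H_k}^2
\;\leq\; C_\eps\,(1+\eps)^{1/r}
\;=\; C_\eps\,\exp\!\bigl(\tfrac{\log(1+\eps)}{r}\bigr).
\]
Theorem \ref{thm=OSPI} transforms this into a classical SPI on $(0,\infty)$, and Wang's criterion (see \cite{Wan00Semigroup}) that a SPI with $\beta(r)\leq\exp(c+c'/r)$ implies the logarithmic Sobolev inequality finishes the one-dimensional case; tensorisation propagates the same constant to every $\gamma_d$.

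The step I expect to be the main obstacle is the quantitative control of $\tilde\beta$ across Theorem \ref{thm=OSPI}: its proof passes through a measure--capacity detour in which only the qualitative statement $\beta(r)\theta^2(\psi(\kappa))\to 0$ is exploited, so one has to revisit that argument with $\psi$ and $\bstar$ tuned as functions of $r$ in order to preserve the $\exp(c'/r)$ decay when going from $\beta$ to $\tilde\beta$. Once this is secured, the remaining ingredients (Hermite moment bounds from hypercontractivity, tensorisation of LSI, and the SPI $\Rightarrow$ LSI criterion) are either classical or direct computations.
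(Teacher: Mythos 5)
Your strategy tracks the paper's general architecture (spectral decomposition $\to$ Orlicz--SPI $\to$ measure--capacity/SPI $\to$ LSI), but there is a conceptual problem with the key input you substitute: you bound $\norm{\PhiStar}{H_k}$ via Nelson's hypercontractivity for the Ornstein--Uhlenbeck semigroup. For the Gaussian measure, hypercontractivity and the logarithmic Sobolev inequality are \emph{equivalent} (Gross's theorem), so invoking Nelson to control the Hermite norms makes the argument essentially circular --- if you allow yourself hypercontractivity, you can prove LSI directly without any detour through SPI, capacities, or Orlicz norms. The whole point of the paper's section is to give a \emph{spectral} proof avoiding hypercontractivity; this is precisely why the paper instead proves the cruder but hypercontractivity-free bound $\norm{p}{H_n}\leq C^n p^{3n/4}$ via the Plancherel--Rotach asymptotics (Theorem \ref{thm=LpHermite}). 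Replacing that lemma by Nelson's theorem removes the content of the exercise.

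A second, minor, difference: you route the argument through Wang's SPI$\Rightarrow$LSI criterion and tensorisation, whereas the paper never goes back to a classical SPI. Instead it derives the measure--capacity bound $C_\kappa\geq\tfrac{1}{32}\log(1/\kappa)$ directly from the Orlicz--SPI (choosing $\psi(x)=x\log(1/x)$, $\bstar=1/2$, and $r_\kappa=(\tfrac14\log(1/\kappa))^{-1}$ in the proof scheme of Theorem \ref{thm=OSPI}, with the dimension dependence tracked explicitly in the Claim) and then invokes the capacity criterion for LSI from \cite{BR03}. This is exactly the ``reopening'' of Theorem \ref{thm=OSPI} that you correctly flag as the main obstacle, and it is where essentially all of the technical work of the paper lives: your proposal identifies the gap but does not close it. Your use of tensorisation to pass from $d=1$ to general $d$ is legitimate and would simplify the bookkeeping, but it forfeits the paper's stated interest of obtaining dimension-free constants directly from the spectral/capacity machinery.
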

As we have written before, the use of capacity--measure inequalities prevents us from getting the optimal constant. Interestingly enough, we are however able to prove directly a dimension free inequality: $C$ does not depend on $d$.

The spectral decomposition of the Ornstein--Uhlenbeck operator is fully known, and we recall it here (details and proofs may be found e.g. in the first chapter of \cite{Bog98}). The eigenvectors are defined in terms of Hermite polynomials. In dimension $1$, these are the natural orthogonal family $(H_k)_{k\in\xN}$  associated with the scalar product $(f,g) = \int fg \dmu$; we normalize them so that $\norm{2}{H_k} = 1$.
In dimension $d$, we can define $H_\alpha = H_{\alpha_1}(x_1) \cdots H_{\alpha_d}(x_d)$ for any multiindex $\alpha\in\xN^d$. With these notations, we have the following description of the spectrum.
\begin{thrm}
  The spectrum of $-L$ is the set of integers $\xN$. In dimension $1$, the eigenvalues are simple, and $H_k$ is an eigenvector with eigenvalue $k$. In dimension $d$, the family $\{ H_\alpha, \alpha\in\xN^d, \abs{\alpha} = k\}$ form an orthonormal base of the eigenspace $E_{k}$ associated with $k$.  These eigenspaces satisfy: 
  \begin{equation}
    \dim(E_{k}) = \binom{k+d-1}{d}, \qquad \dim\left(E_{[0,k]}\right) = \dim (\bigoplus_{i=0}^k E_i) = \binom{k+d}{d} \leq 2^d k^d. 
    \label{eq=dimEigenspaces}
  \end{equation}
\end{thrm}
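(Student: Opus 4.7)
The proof splits naturally into three stages: the one-dimensional case, tensorization to dimension $d$, and a combinatorial dimension count. In dimension 1, the plan is to define the Hermite polynomials via the Rodrigues formula $H_k(x) = \frac{(-1)^k}{\sqrt{k!}} e^{x^2/2} \frac{d^k}{dx^k} e^{-x^2/2}$, so that $H_k$ is a polynomial of degree exactly $k$. A direct computation, checking that $H_k'' - x H_k' = -k H_k$, shows that $H_k$ is an eigenvector of $L = \partial_x^2 - x\partial_x$ with eigenvalue $-k$. Orthonormality of $(H_k)_{k \geq 0}$ in $L^2(\gamma_1)$ follows by integration by parts: the Rodrigues formula rewrites $\int H_j H_k\, d\gamma_1$ as an integral where $k$ derivatives fall on $H_j$, which vanishes when $k>j$ (since $\deg H_j = j$) and gives $1$ when $j=k$ with our normalization.

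The key step is then to show that $(H_k)$ is a \emph{complete} orthonormal system, equivalently that polynomials are dense in $L^2(\gamma_1)$. I would proceed by duality: suppose $f \in L^2(\gamma_1)$ is orthogonal to every polynomial. The Gaussian decay of $d\gamma_1$ together with Cauchy--Schwarz gives $\int \abs{f(x)} e^{\abs{tx}} d\gamma_1(x) < \infty$ for every $t \in \mathbb{R}$, so the function $\varphi(t) = \int f(x) e^{tx}\, d\gamma_1(x)$ is holomorphic on all of $\mathbb{C}$. Expanding the exponential as a power series and interchanging sum and integral (justified by the above integrability), each Taylor coefficient at $0$ equals $\int f(x) x^n\, d\gamma_1(x)/n! = 0$ by hypothesis, so $\varphi \equiv 0$. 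The vanishing of this Laplace transform forces $f = 0$ a.s., proving density of polynomials and hence completeness. It follows that $-L$ has pure point spectrum $\xN$ with simple eigenvalues in dimension one.

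The passage to dimension $d$ is by tensorization. Since $\gamma_d = \gamma_1^{\otimes d}$ and $L_d = \Delta - x\cdot\nabla$ decomposes as $L_d = \sum_{i=1}^d L_1^{(i)}$, with $L_1^{(i)}$ acting only on the $i$-th coordinate, the product $H_\alpha(x) = \prod_{i=1}^d H_{\alpha_i}(x_i)$ satisfies $-L_d H_\alpha = \abs{\alpha} H_\alpha$. Orthonormality of $\{H_\alpha\}_{\alpha \in \xN^d}$ follows by Fubini from the one-dimensional case, and completeness in $L^2(\gamma_d)$ is the standard statement that the tensor product of complete orthonormal systems is complete in the tensor product Hilbert space $L^2(\gamma_1)^{\otimes d} = L^2(\gamma_d)$. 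Hence $\{H_\alpha : \abs{\alpha}=k\}$ is an orthonormal basis of the eigenspace $E_k$. The dimension formulas are then a stars-and-bars count: $\dim E_k$ equals the number of $\alpha \in \xN^d$ with $\abs{\alpha}=k$, and $\dim(\bigoplus_{i \leq k} E_i)$ the number with $\abs{\alpha} \leq k$; the crude bound $\binom{k+d}{d} \leq 2^d k^d$ (for $k \geq 1$) comes from $\binom{k+d}{d} \leq (k+d)^d/d!$ and $k+d \leq 2k d$ suitably handled. The only genuinely nontrivial ingredient is the density of polynomials in $L^2(\gamma_1)$; everything else reduces to direct computation and tensor-product bookkeeping.
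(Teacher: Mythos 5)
The paper does not actually prove this theorem: it is stated as background with a pointer to the first chapter of \cite{Bog98}. Your argument is therefore a supplement rather than an alternative, but it is essentially the standard one that reference gives: Rodrigues formula, verification of the ODE $H_k''-xH_k'=-kH_k$, orthonormality by integration by parts, completeness of polynomials via analyticity of the Laplace transform $t\mapsto\int f e^{tx}\,d\gamma_1$ (equivalently, the vanishing of the Fourier transform of $f\gamma_1$), tensorization, and stars-and-bars.

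Three small remarks. First, carrying out the stars-and-bars count as you propose would reveal a misprint in the statement: the number of $\alpha\in\xN^d$ with $\abs{\alpha}=k$ is $\binom{k+d-1}{d-1}$, not $\binom{k+d-1}{d}$ (at $k=0$, $d\geq 1$, the printed formula gives $0$ instead of $1$). The cumulative formula $\dim E_{[0,k]}=\binom{k+d}{d}$ and the bound $\binom{k+d}{d}\le 2^d k^d$, which are what the paper actually uses later, are correct. Second, the route you sketch for that bound, $\binom{k+d}{d}\le(k+d)^d/d!$ and $k+d\le 2kd$, does not quite close, since $(2kd)^d/d! = 2^dk^d\cdot d^d/d!$ and $d^d/d!>1$; it is cleaner to write $\binom{k+d}{d}=\prod_{i=1}^d\tfrac{k+i}{i}\le(k+1)^d\le(2k)^d$ for $k\ge 1$. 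Third, to conclude that $\sigma(-L)$ \emph{equals} $\xN$, and not merely that every integer is an eigenvalue, one should record that the operator with core the polynomials (or $C_c^\infty$) is essentially self-adjoint; given a complete orthonormal eigenbasis, the spectrum of the closure is then exactly the closure of the set of eigenvalues. You implicitly assume this, and it is a routine point, but it is the one place where a precise reference is genuinely doing work.
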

 To apply the results of the previous sections and get the log Sobolev inequality, we will need bounds on the Hermite polynomials.

\subsection{A bound on Hermite polynomials}
There are known bounds on the $L^p$ norms of Hermite polynomials, for fixed $p$ and $n$ large: see e.g. \cite{Lar02}. What we need here is a much cruder, but more robust bound, for all $p$ and $n$. 
\begin{thrm}
  \label{thm=LpHermite}
  There is a constant $C$ such that, for all $n$ and all $p>2$, 
  \begin{equation}
    \label{eq=LpHermiteUnidim}
    \norm{p}{H_n} \leq C^n p^{3n/4}.
  \end{equation}
  A similar estimate holds in the multivariate case:
\begin{align}
  \exists C, \forall p, \forall n, \forall d, \forall \alpha, \abs{\alpha} = n, 
  \norm{p}{H_\alpha} 
  &\leq \prod_{k=1}^d \norm{p}{H_{\alpha_k}} \notag \\
  &\leq  \prod_{k=1}^d C^{\alpha_k} p^{3\alpha_k /4} \leq C^n p^{3n/4}.
  \label{eq=LpHermite}
\end{align}
\end{thrm}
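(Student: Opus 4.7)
The natural starting point is the classical generating function for the normalized Hermite polynomials,
\[ F(t,x) := e^{tx - t^2/2} = \sum_{n\geq 0} \frac{t^n}{\sqrt{n!}} H_n(x), \]
together with Cauchy's integral formula on a circle of radius $r>0$ in the complex $t$-plane:
\[ H_n(x) = \frac{\sqrt{n!}}{2\pi} \int_0^{2\pi} F(re^{i\theta},x)\, r^{-n} e^{-in\theta}\, d\theta. \]
My plan is to take $L^p$-norms on both sides, push the norm inside by Minkowski's integral inequality, and estimate the modulus of the generating function on the contour explicitly.

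The core computation is the $L^p$-norm of $F(re^{i\theta},\cdot)$. Since $|F(re^{i\theta},x)| = \exp(r\cos\theta\cdot x - \tfrac12 r^2\cos 2\theta)$, this reduces to computing a Gaussian Laplace transform; a direct calculation, using $\cos 2\theta = 2\cos^2\theta - 1$, gives
\[ \norm{p}{F(re^{i\theta},\cdot)} = \exp\!\left(\tfrac{r^2}{2}\bigl((p-2)\cos^2\theta + 1\bigr)\right) \leq e^{r^2(p-1)/2} \]
for $p\geq 2$. Combined with Minkowski and optimized at $r = \sqrt{n/(p-1)}$, this yields
\[ \norm{p}{H_n} \leq \sqrt{n!}\cdot e^{n/2}\cdot \frac{(p-1)^{n/2}}{n^{n/2}}, \]
and Stirling's formula then simplifies this to $\norm{p}{H_n} \leq C n^{1/4}(p-1)^{n/2}$.

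To reach the stated form, I absorb the subexponential factor $n^{1/4}$ into $C^n$ and use $(p-1)^{n/2}\leq p^{n/2}\leq p^{3n/4}$ (valid for $p\geq 2$), giving the univariate bound \eqref{eq=LpHermiteUnidim}. The multivariate bound \eqref{eq=LpHermite} is then immediate: $\gamma_d$ is a product and $H_\alpha(x) = \prod_k H_{\alpha_k}(x_k)$, so Fubini gives $\norm{p}{H_\alpha}^p = \prod_k \norm{p}{H_{\alpha_k}}^p$, and the one-dimensional estimate applied factorwise concludes. The only real subtlety, more conceptual than technical, is to avoid any circular invocation of hypercontractivity (which would give the sharper exponent $n/2$, but is essentially equivalent to the log-Sobolev inequality we are about to prove); the Cauchy-integral argument above is purely elementary and sidesteps this issue.
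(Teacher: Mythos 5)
Your proof is correct, and it takes a genuinely different route from the paper. The paper's argument goes through Plancherel--Rotach asymptotics for $H_n$: it splits the integral $\int |H_n|^p\,d\gamma$ into an oscillating zone $|x|\leq N - n^{-1/6}$, a frontier zone, and a tail zone $|x|\geq N + n^{-1/6}$, bounds each piece separately using the three asymptotic regimes (equations \eqref{eq=oscPart}--\eqref{eq=frontierPart}), and assembles the pieces. Your argument instead uses the generating function $e^{tx - t^2/2} = \sum_n t^n H_n(x)/\sqrt{n!}$, extracts $H_n$ by Cauchy's formula on a circle $|t|=r$, moves the $L^p(\gamma)$-norm inside by Minkowski, and evaluates $\norm{p}{F(re^{i\theta},\cdot)}$ in closed form via the Gaussian Laplace transform; optimizing $r=\sqrt{n/(p-1)}$ and applying Stirling gives $\norm{p}{H_n}\leq e^{1/2}\,n^{1/4}(p-1)^{n/2}$. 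I checked the key identity: with $\cos 2\theta = 2\cos^2\theta-1$ the exponent is $\tfrac{r^2}{2}\bigl(1+(p-2)\cos^2\theta\bigr)$, bounded by $\tfrac{r^2}{2}(p-1)$ for $p\geq 2$, and the optimization and Stirling steps are as you state. Your route is shorter, avoids the three-regime casework entirely, and in fact yields the stronger exponent $(p-1)^{n/2}$ (with only a polynomial $n^{1/4}$ prefactor, absorbable into $C^n$), essentially matching the hypercontractive bound $\norm{p}{H_n}\leq(p-1)^{n/2}$ without invoking hypercontractivity --- a point you rightly flag, since hypercontractivity is equivalent to the log-Sobolev inequality the paper is ultimately after. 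Since $(p-1)^{n/2}\leq p^{3n/4}$, your bound implies the paper's, and the product structure of $\gamma_d$ and $H_\alpha$ makes \eqref{eq=LpHermite} follow by Fubini exactly as you say.
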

To prove this result, we need information on the asymptotics of the Hermite polynomials. Let us quote a result of Plancherel and Rotach (\cite{PR29}, see also \cite{Sze39}, \S 8.22, and \cite{Lar02}; the $\sqrt{n!}$ that appears in the latter comes from a choice of normalisation). There are three different regimes (for $x$ in the oscillating zone, for $x$ large and on the ``frontier''). On each part the asymptotic is written with an appropriate change of variables.
\begin{thrm}[ Plancherel and Rotach]
  Let $N = \sqrt{4n+2}$. 
  \begin{itemize}
    \item For $x = N\sin(\phi)$, $\abs{\phi} < \pi/2$. 
      Then 
      \begin{equation}
	\label{eq=oscPart}
	e^{-x^2/4} H_n(x) = \frac{a_n}{\cos(\phi)}
      \times \left( \sin\left( \frac{N^2}{8}(2\phi + \sin(2\phi)) - \frac{(n-1)\pi}{2}\right) + O \left(\frac{1}{n\cos^3(\phi)}\right)\right),
    \end{equation}
    where $a_n = (2/\pi)^{1/4} n^{-1/4}$. 
  \item For $x= Ncosh(\phi), 0<\phi<\infty$, 
    \begin{equation}
      e^{-x^2/4} H_n(x) = \frac{b_n}{\sqrt{\sinh(\phi)}}
      \times \exp\left( \frac{N^2}{8} (2\phi - \sinh(2\phi)) \right)
      \left( 1 + O\left( \frac{1}{n(\sinh(\phi) e^{-\phi})^3} \right)\right),
      \label{eq=nonOscPart}
    \end{equation}
    with $b_n = (8\pi)^{-1/4} n^{-1/4}$.
  \item Finally, for $x = N - 3^{-1/3}n^{-1/6} t$, with $t$ bounded, 
    \begin{equation}
      e^{-x^2/4} H_n(x) = d_n\left( A(t) + O (n^{-2/3}) \right), 
      \label{eq=frontierPart}
    \end{equation}
    where $d_n = 3^{1/3}(2/\pi^3)^{1/4} n^{-1/12}$ and $A(t)$ is the Airy function. 
  \end{itemize}
\end{thrm}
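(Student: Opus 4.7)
The plan is to prove the three asymptotic formulas by the method of steepest descent applied to a contour integral representation of $H_n$. Start from
\[
H_n(x) = \frac{n!}{2\pi i}\oint e^{2xt - t^2}\,\frac{dt}{t^{n+1}},
\]
where the contour encircles the origin once counterclockwise. Multiplying by $e^{-x^2/4}$ absorbs the Gaussian into the exponent, which after collecting terms reads $\varphi_n(t) = 2xt - t^2 - x^2/4 - (n+1)\log t$. The asymptotics are then governed by the saddle points of $\varphi_n$, which satisfy $t^2 - xt + (n+1)/2 = 0$ and are given by $t_\pm = \tfrac{x}{2} \pm \tfrac{1}{2}\sqrt{x^2 - N^2}$, with $N = \sqrt{4n+2}$. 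These roots are complex conjugate, real and distinct, or coincide precisely when $|x| < N$, $|x| > N$, or $|x| = N$: this trichotomy is what produces the three regimes of the theorem.

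In the oscillatory regime $x = N\sin\phi$, the two conjugate saddles $t_\pm$ lie on the circle $|t|^2 = (n+1)/2$; deforming the original circular contour into the steepest-descent paths through both $t_\pm$ and summing the two contributions produces a cosine whose argument is $\mathrm{Im}\,\varphi_n(t_+)$. A direct parametric calculation, with an additional $\pi/2$ shift coming from the complex square root of $\varphi_n''(t_\pm)$, identifies this phase as $\tfrac{N^2}{8}(2\phi + \sin 2\phi) - \tfrac{(n-1)\pi}{2}$, yielding \eqref{eq=oscPart}. The error $O(1/(n\cos^3\phi))$ reflects the next term in the steepest-descent expansion, controlled by $|\varphi_n'''|/|\varphi_n''|^{3/2}$ at the saddles, and worsens as $\phi \to \pm \pi/2$. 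In the non-oscillatory regime $x = N\cosh\phi$ the saddles are real, and only the larger root $t_+$ matters once the contour is deformed through it along the real axis; evaluating $\varphi_n(t_+)$ and $\varphi_n''(t_+)$ after the substitution delivers the exponent $\tfrac{N^2}{8}(2\phi - \sinh 2\phi)$ and the amplitude $b_n/\sqrt{\sinh\phi}$ of \eqref{eq=nonOscPart}.

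The main obstacle is the frontier regime $x = N - 3^{-1/3}n^{-1/6}t$, where $t_+$ and $t_-$ coalesce at a common value $t_*$ and the Gaussian approximation breaks down since $\varphi_n''(t_*) \to 0$. The remedy is to expand $\varphi_n$ to third order around $t_*$, retaining the linear perturbation caused by the offset $x - N = -3^{-1/3}n^{-1/6}t$, and to rescale $t - t_* = c\, n^{-1/3} s$ with $c$ chosen so that the cubic coefficient normalises to $i/3$; the contour integral then reduces, up to an $O(n^{-2/3})$ remainder, to $\tfrac{1}{\pi}\int_0^\infty \cos(s^3/3 + ts)\,ds$, which is the definition of the Airy function $A(t)$. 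Combining the Jacobian of the rescaling with Stirling's formula for $n!$ produces the prefactor $d_n = 3^{1/3}(2/\pi^3)^{1/4}n^{-1/12}$. The technical heart of the argument is to verify that this third-order truncation is uniform in bounded $t$, that the deformed contour can be taken along the Airy steepest-descent rays without crossing the pole at the origin, and that the three regimes patch together consistently, with error estimates matching on the overlap zones (near $|\phi|\to\pi/2$ from the oscillatory side and $\phi\to 0^+$ from the non-oscillatory side).
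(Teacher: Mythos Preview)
The paper does not prove this theorem at all: it is quoted verbatim as a classical result of Plancherel and Rotach, with references to \cite{PR29}, \cite{Sze39} and \cite{Lar02}, and is then used as a black box in the proof of the $L^p$ bound (theorem~\ref{thm=LpHermite}). So there is no ``paper's own proof'' to compare your proposal against.

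That said, your sketch is the standard route to the Plancherel--Rotach asymptotics and is essentially correct in outline: a contour integral coming from the generating function, a steepest-descent analysis with two complex-conjugate saddles in the oscillatory zone, a single dominant real saddle in the exponential zone, and a cubic (Airy) rescaling at coalescence. One point to tidy up is the normalisation. The integral you wrote, $H_n(x)=\frac{n!}{2\pi i}\oint e^{2xt-t^2}\,t^{-n-1}\,dt$, is the representation for the physicist's Hermite polynomials, whereas the paper's $H_n$ are the $L^2(\gamma)$-normalised probabilist's polynomials; more concretely, your saddle equation $t^2-xt+(n+1)/2=0$ gives a discriminant $x^2-(2n+2)$, not $x^2-N^2=x^2-(4n+2)$, so the turning points do not land at $\pm N$ as stated. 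This is only a bookkeeping issue (a rescaling of $x$ and of $H_n$ fixes it), but if you want the formulas \eqref{eq=oscPart}--\eqref{eq=frontierPart} with exactly the constants $a_n,b_n,d_n$ and the parameter $N=\sqrt{4n+2}$ as written, you should start from the integral representation matching the paper's normalisation and redo the saddle computation accordingly.
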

\begin{proof}
  In this proof, $C$ ($c$) is a large (small) constant, independent of $n,p,x$, that may change from line to line. 
  We want to bound $\int H_n(x)^p d\gamma(x)$. We decompose this integral in three parts: 
  \begin{align*}
  I_1 &= \int_{\abs{x} \leq N - n^{-1/6}} H_n(x)^p d\gamma(x), \\
  I_2  &= \int_{\abs{x} \in [N - n^{-1/6}, N+ n^{-1/6}]} H_n(x)^pd\gamma(x), \\
  I_3  &= \int_{\abs{x} \geq N + n^{-1/6}} H_n(x)^pd\gamma(x).
  \end{align*}
  Let us begin by the oscillating part $I_1$. It is easy to see that, since $x\leq N - n^{-1/6}$, $\cos(\phi)\geq cn^{-1/3}$. Therefore $\frac{1}{\cos^3(\phi) n}$ is bounded (by some $C$ independent of $x,n$), and
  \begin{align*}
  \exists C, \forall n, \forall x\leq N - n^{-1/6}, \quad
  e^{-x^2/4} \abs{H_n(x)} &\leq  C \frac{a_n}{\cos(\phi)} \\
  &\leq C n^{-1/4}n^{1/3}  \\
  &\leq C n^{1/12}.
  \end{align*}
  We get
  \begin{align}
    I_1 &\leq \int_{[-N,N]} C^p e^{px^2/4} n^{p/12} e^{-x^2/2} dx \notag \\
    &\leq C^p n^{1/2 + p/12} e^{p(n+ 1/2)}.
    \label{eq=onIOne}
  \end{align}
  Similarly, the ``frontier'' part $I_2$ is easily bounded (thanks to \eqref{eq=frontierPart}) by
  \begin{equation}
    \label{eq=onITwo}
    I_2 \leq C n^{-p/12}e^{p(n+1)}.
  \end{equation}
  The main contribution comes from the non oscillating part $I_3$. In $I_3$, $x \geq N + n^{-1/6}$, so $\cosh(\phi) \geq 1+ n^{-2/3}$. Therefore, $\sinh(\phi) \geq n^{-1/3}$. For large $\phi$, $\sinh(\phi) e^{-\phi}$ is bounded away from zero. All this shows that we can simplify the bound \eqref{eq=nonOscPart} and get: 
  \begin{align*}
  e^{-x^2/4} H_n(x) \leq C n^{-1/4} n^{1/6} 
         \exp \left( \frac{N^2}{8} (2\phi - \sinh(2\phi)) \right) \\
  \leq C \exp \left( \frac{N^2}{8} (2\phi - \sinh(2\phi)) \right)
\end{align*}
Since $x = N\cosh \phi$, we can put the $e^{-x^2/4}$ on the r.h.s. and get: 
  \begin{align*}
  H_n(x) &\leq C 
         \exp \left( \frac{N^2}{8} (2\phi + 2 \cosh(\phi)^2 - \sinh(2\phi)) \right) \\
	 &\leq C
	 \exp \left( \frac{N^2}{8} (2\phi + 1 + \exp(-2\phi))\right)\\
	 &\leq C\exp (n + 1/2) \exp \left( (n+1/2) \phi\right)
\end{align*}
Finally $\phi \leq \log( 2 \cosh (\phi)) \leq \log(2x/N)$, so
\begin{equation*}
  H_n(x) \leq C\left(\frac{2 e x}{N}\right)^{n + 1/2}
\end{equation*}
for $x\geq N + n^{-1/6}$. Going back to $I_3$, we get 
\begin{align*}
  I_3 &\leq C^p \int_x  \left( \frac{2e x}{N} \right)^{(n + 1/2)p} e^{-x^2/2} dx \\
  &\leq C^{np} \frac{1}{N^{(n+1/2)p}} \int_x x^{(n+1/2)p} d\gamma(x).
\end{align*}
The moments of the standard Gaussian measure satisfy $\int x^{2k} d\gamma(x)
\leq 2^k k! \leq C k^{k}$. This yields (recalling that $N = \sqrt{4n+2}$)
\begin{align}
I_3 
  &\leq C^{np} \frac{1}{N^{(n+1)p}}  \left( \frac{ (n+1/2) p}{2}\right)^{(n+1/2)p/2}  \notag\\
  &\leq  C^{np} p^{(n+1/2)p/2}.
  \label{eq=onIThree}
\end{align}
The three bounds \eqref{eq=onIOne}, \eqref{eq=onITwo} and \eqref{eq=onIThree} on $I_1, I_2$ and $I_3$ imply: 
\[
  \norm{p}{H_n} \leq (I_1 + I_2 + I_3)^{1/p}
  \leq C^n p^{(n+1/2)/2} \leq C^n p^{3n/4},
\]
therefore \eqref{eq=LpHermiteUnidim} holds. The multidimensional estimate \eqref{eq=LpHermite} is a direct consequence of this. This concludes the proof of theorem \ref{thm=LpHermite}.
\end{proof}

\subsection{The log Sobolev inequality}
The spectral decomposition easily  gives us a Orlicz-SPI, thanks to the reasoning of section~\ref{sec=WangRevisited}. Indeed, the $H_\alpha$ are in $L^p(\mu)$, for any $p<\infty$ (they are polynomials and  $\mu$ is the standard gaussian). For any $p>2$, let $q$ be the conjugate exponent ($p^{-1} + q^{-1} = 2$). We get \eqref{eq=OSPIbySpectralMeans}: 
\begin{equation}
  \label{eq=OSPIq}
\int f^2 \dmu \leq r \int \abs{\nabla f}^2 \dmu + \beta(r) \norm{q}{f}^2,
\end{equation}
where $\beta(r) = \sum_{\alpha, \abs{\alpha} \leq 1/r} \norm{p}{H_\alpha}^2$. 
Using the bounds \eqref{eq=dimEigenspaces} and \eqref{eq=LpHermite} from previous section, we get:
To sum up, the factor $\beta(r)$  that appears in \eqref{eq=OSPIq} is bounded by: 
\begin{align*}
  \beta(r) &\leq \sum_{\alpha, \abs{\alpha} \leq m(r)} \norm{p}{H_\alpha}^2 \\
  &\leq \dim (E_{[0,1/r]}) C^{2/r} p^{3/(2r)} \\
  &\leq 2^d\frac{1}{r^d} C^{2/r} p^{3/(2r)}.
\end{align*}

To prove the log Sobolev inequality, we use once more the measure-capacity criterion. Let us apply theorem \ref{thm=OSPI} to our case. We get a measure capacity inequality, with
\[
C_\kappa = 
   \min\left(
    \frac{C_P\psi(\kappa)}{\kappa} \bstar^2 ,
    \sup_{r>0} \frac{ 1 - \beta(r) \theta^2(\psi(\kappa))}{r}(1 - \bstar)^2 
  \right),
  \]
  where $C_P$ is the a priori bound on the constant $C_\kappa$, given by the Poincaré inequality (here the spectrum is known, so we can choose $C_P = 1$). Since we are not interested in optimal constants here, we fix $b^\star = 1/2$. 
  In the proof of theorem \ref{thm=OSPI} we  choose $\psi(x) = x\log(1/x)$: the estimate becomes
  \begin{equation}
    \label{eq=minCKappa}
  C_\kappa = \frac{1}{4}
   \min\left(
    \log(1/\kappa) ,
    \sup_{r>0} \frac{ 1 - \beta(r) \theta^2(\psi(\kappa))}{r}
  \right)
\end{equation}
  We now make the following 
  \begin{claim}
    There exists $\kappa_1$, and, for all $\kappa <\kappa_1$, a $r_\kappa$, such that
    \[
    \forall \kappa < \kappa_1, \forall d, \exists p, \quad
    \beta(r_\kappa) \theta^2 (\psi(\kappa)) \leq 1/2.
    \]
    One can choose $r_\kappa = (\frac{1}{4} \log(1/\kappa))^{-1}$.
  \end{claim}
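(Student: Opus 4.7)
The plan is to substitute the explicit Hermite-based bound on $\beta$ from the preceding display, together with the explicit form of $\theta^2$ induced by the $L^q$-Orlicz norm used in \eqref{eq=OSPIq}, and then to exploit the freedom in $p$ to absorb both the dimension-dependent prefactor $2^d$ and the polynomial-in-$p$ factor inside $\beta$.

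Concretely, setting $L = \log(1/\kappa)$ and $r_\kappa = 4/L$ (so $1/r_\kappa = L/4$) and plugging into the bound $\beta(r) \leq 2^d r^{-d} C^{2/r} p^{3/(2r)}$ from just above the claim gives $\beta(r_\kappa) \leq 2^d (L/4)^d C^{L/2} p^{3L/8}$. For the Orlicz factor, the norm in \eqref{eq=OSPIq} is $\norm{q}{\cdot}$ with dual Young function of the form $\PhiStar(y) \sim y^p$, so the formula $\theta^2(x) = 4/\PhiStar(\sqrt{1/x})$ from lemma \ref{lmm=lemmeSympaOrlicz} yields $\theta^2(\psi(\kappa)) \lesssim (\kappa L)^{p/2}$. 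Multiplying these and using $\kappa = e^{-L}$ recasts the product as
\[
\beta(r_\kappa)\,\theta^2(\psi(\kappa)) \lesssim 2^d \left(\frac{L}{4}\right)^d C^{L/2} p^{3L/8} \left(L e^{-L}\right)^{p/2}.
\]

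I would then fix $\kappa_1$ so that $L e^{-L} \leq 1/2$ whenever $\kappa \leq \kappa_1$ (for instance $\kappa_1 = e^{-2}$, for which $L e^{-L} \leq 2 e^{-2} < 1/2$). With this choice, for fixed $\kappa$ and $d$ the dimension-dependent prefactor $2^d (L/4)^d C^{L/2}$ is a finite constant, and the remaining $p$-dependence is $p^{3L/8} \cdot 2^{-p/2}$, which tends to zero as $p \to \infty$. Hence $p$ can be chosen large enough, depending only on $d$ and $\kappa$, to force the whole product below $1/2$.

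The main obstacle I foresee is the competition between the polynomial-in-$p$ growth of $\beta$ (through the exponent $3L/8$, which itself depends on $\kappa$) and the exponential-in-$p$ decay of $\theta^2$ (driven by $\kappa L < 1$). The decay must beat the growth by a margin large enough to accommodate an arbitrary $2^d (L/4)^d$ prefactor; this is precisely what forces $\kappa$ to be chosen small \emph{before} any choice of $p$ is made, and explains why the statement is structured as ``$\kappa_1, r_\kappa$ universal, $p$ depending on $d$''.
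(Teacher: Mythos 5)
Your proposal is correct and follows essentially the same route as the paper: substitute the explicit bound $\beta(r)\leq(2/r)^d C^{2/r}p^{3/(2r)}$, compute $\theta^2$ for the power Young function $\PhiStar(y)=y^p/p$, set $r_\kappa=4/\log(1/\kappa)$, and let the exponential decay in $p$ (coming from $(\kappa L)^{p/2}$ with $\kappa L<1$) beat the polynomial-in-$p$ growth of $\beta$. Two small remarks. First, the $\lesssim$ in ``$\theta^2(\psi(\kappa))\lesssim(\kappa L)^{p/2}$'' hides a factor $4p$, which does depend on the variable $p$ you later send to infinity; this is harmless since $4p\cdot p^{3L/8}\cdot 2^{-p/2}\to 0$, but you should carry the factor explicitly rather than sweep it into a $p$-dependent implied constant. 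Second, where your argument is soft (``$p$ can be chosen large enough, depending on $d$ and $\kappa$''), the paper makes the same calculation fully explicit, reducing to $\beta(r_\kappa)\theta^2(\psi(\kappa))\leq\log(1/\kappa)^{d+1-p/2}\kappa^{2/13}$ and then requiring $p>2d+2$ with $\kappa_1$ universal; this extra work buys a concrete choice of $p$, but the claim as stated only asks for existence, so your limiting argument suffices. Incidentally, your computation $\theta^2(\psi(\kappa))=4p(\kappa\log(1/\kappa))^{p/2}$ is actually the one consistent with the declared $\psi(x)=x\log(1/x)$; the paper's own display writes $\kappa^{p/2}/\log(1/\kappa)^{p/2}$, which appears to be a slip, though it does not affect the conclusion since both quantities decay geometrically in $p$.
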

  \begin{proof}
    [Proof of the claim]
  We have chosen to work with $L^p$ norms, this corresponds to $\PhiStar(x) = \frac{x^p}{p}$, $\Phi(x) = \frac{x^q}{q}$. Therefore $\theta^2(x) = 4p x^{p/2}$, and we get:
  \begin{align*}
    \beta(r) \theta^2(\psi(\kappa)) 
    &\leq \left(\frac{2}{r}\right)^d C^{2/r} p^{3/(2r)} \cdot
      4p \frac{\kappa^{p/2}}{\log(1/\kappa)^{p/2}}.
  \end{align*}
  Now we choose a specific $r=r_\kappa$, which does not depend on $d$ and $p$: $r_\kappa = (\log(1/\kappa)/4)^{-1}$. We impose $p>C^2$.  With this choice,   $C^{2/r}p^{3/(2r)}\leq p^{5r/2} = \exp\left( (5/2)\log(p) \log(1/\kappa) \right) \leq \exp\left( \frac{5p}{12} \log(1/\kappa)\right) =  \kappa^{-5p/12}$ (we used the elementary inequality $\log(p) \leq p/6$). The inequality becomes:
  \begin{align*}
    \beta(r) \theta^2(\psi(\kappa)) 
    &\leq \left(\frac{2}{r}\right)^d  \kappa^{-5p/12}  \cdot
      4p \frac{\kappa^{p/2}}{\log(1/\kappa)^{p/2}} \\
    &\leq 2^{-d} \log(1/\kappa)^{d - p/2} \cdot 4p \kappa^{p/12}.
  \end{align*}
  Now, $p\kappa^{p/12} \leq C \kappa^{p/13}$, where $C$ does not depend on $p$, so
  \[
    \beta(r) \theta^2(\psi(\kappa)) 
    \leq C \log(1/\kappa)^{d- p/2} \kappa^{p/13}.
    \]
    Finally, there is a $\kappa_0$ (independent of $p,d$) such that, for $\kappa<\kappa_0$, $C\leq\log(1/\kappa)$:
  \[
    \beta(r) \theta^2(\psi(\kappa)) 
    \leq \log(1/\kappa)^{d + 1 - p/2} \kappa^{p/13} 
    \leq \log(1/\kappa)^{d + 1 - p/2} \kappa^{2/13}.
    \]
    For $p> 2d + 2$, and $\kappa < \kappa_1$ (which does not depend on $(p,d)$), 
    \[ \beta(r) \theta^2 (\psi(\kappa)) \leq \frac{1}{2},\]
    and the claim is proved.
  \end{proof}
  Going back to \eqref{eq=minCKappa}, we can bound the sup in the second term by the value of the argument for $r = r_\kappa$: 
  \begin{align*}
  C_\kappa &\geq \frac{1}{4} \min \left( \log(1/\kappa), \frac{1}{2 r_\kappa}\right) \\
  &\geq \frac{1}{32} \log(1/\kappa).
\end{align*}
Such an inequality is known to imply the log Sobolev inequality (see \eg{} \cite{BR03}, part 4.6). The dimension does not appear in the last inequality, so we obtain the same (non optimal) constant for any dimension $d$.

\bibliographystyle{amsalpha}
\bibliography{biblio}
\end{document}